\renewcommand{\theequation}{\thesection.\arabic{equation}}
\newtheorem{thm}{Theorem}[section]
\newtheorem{lem}[thm]{Lemma}
\newtheorem{assumption}[thm]{Assumption}
\newtheorem{rem}[thm]{Remark}
\begin{document}
\newcommand{\BX}{{\bf X}}
\newcommand{\cv}{{\cal V}}
\newcommand{\cW}{{\cal W}}
\newcommand{\co}{{\cal O}}

\renewcommand{\theequation}{\thesection.\arabic{equation}}
\def\@eqnnum{{\reset@font\rm (\theequation)}}

\def\abstract{
\advance \rightskip by 10mm
\advance \leftskip by 10mm
\vspace{-0.8em}
\noindent
\small{\bf Abstract.}
}
\def\endabstract{\par\normalsize\rm}

\def\Xint#1{\mathchoice
{\XXint\displaystyle\textstyle{#1}}%
{\XXint\textstyle\scriptstyle{#1}}%
{\XXint\scriptstyle\scriptscriptstyle{#1}}%
{\XXint\scriptscriptstyle\scriptscriptstyle{#1}}%
\!\int}
\def\XXint#1#2#3{{\setbox0=\hbox{$#1{#2#3}{\int}$}
\vcenter{\hbox{$#2#3$}}\kern-.5\wd0}}
\def\ddashint{\Xint=}
\def\dashint{\Xint-}

\def\a{\alpha}
\def\b{\beta}
\def\d{\delta}\def\D{\Delta}
\def\e{\epsilon}
\def\g{\gamma}\def\G{\Gamma}
\def\k{\kappa}
\def\lam{\lambda}\def\Lam{\Lambda}
\renewcommand\o{\omega}\renewcommand\O{\Omega}
\def\s{\sigma}\def\S{\Sigma}
\renewcommand\t{\theta}\def\vt{\vartheta}
\newcommand{\vphi}{\varphi}
\def\z{\zeta}

\newcommand{\tsigma}{\tilde{\s}}
\newcommand{\tbsigma}{\tilde{\bsigma}}
\def\te{\tilde{\e}}
\def\tu{\tilde{u}}

\newcommand{\bchi}{\mbox{\boldmath$\chi$}}
\newcommand{\bdelta}{\mbox{\boldmath$\delta$}}
\newcommand{\bepsilon}{\mbox{\boldmath$\epsilon$}}
\newcommand{\bfeta}{\mbox{\boldmath$\eta$}}
\newcommand{\bgamma}{\mbox{\boldmath$\gamma$}}
\newcommand{\bomega}{\mbox{\boldmath$\omega$}}
\newcommand{\bvphi}{\mbox{\boldmath$\varphi$}}
\newcommand{\bphi}{\mbox{\boldmath$\phi$}}
\newcommand{\bPhi}{\mbox{\boldmath$\Phi$}}
\newcommand{\bpsi}{\mbox{\boldmath$\psi$}}
\newcommand{\bPsi}{\mbox{\boldmath$\Psi$}}
\newcommand{\bsigma}{\mbox{\boldmath$\sigma$}}
\newcommand{\btau}{\mbox{\boldmath$\tau$}}
\newcommand{\bxi}{\mbox{\boldmath$\xi$}}
\newcommand{\brho}{\mbox{\boldmath$\rho$}}

\newcommand{\bbeta}{\mbox{\boldmath$\beta$}}
\newcommand{\bzeta}{\mbox{\boldmath$\zeta$}}

\def\bk{\boldsymbol{\kappa}}
\def\bmu{\boldsymbol\mu}
\def\bxi{\boldsymbol{\xi}}
\def\bz{\boldsymbol{\zeta}}

\def\ba{{\bf a}}
\def\bb{{\bf b}}
\def\bc{{\bf c}}
\def\be{{\bf e}}
\def\bff{{\bf f}}
\def\bg{{\bf g}}
\def\bn{{\bf n}}
\def\bp{{\bf p}}
\def\bq{{\bf q}}
\def\bs{{\bf s}}
\def\bt{{\bf t}}
\def\bu{{\bf u}}
\def\bv{{\bf v}}
\def\bw{{\bf w}}
\def\bx{{\bf x}}
\def\by{{\bf y}}
\def\bzz{{\bf z}}

\def\bD{{\bf D}}
\def\bE{{\bf E}}
\def\bF{{\bf F}}
\def\bH{{\bf H}}
\def\bJ{{\bf J}}
\def\bV{{\bf V}}
\def\bU{{\bf U}}
\def\bW{{\bf W}}
\def\bX{{\bf X}}
\def\bY{{\bf Y}}

\def\cA{{\cal A}}
\def\cC{{\cal C}}
\def\cD{{\cal D}}
\def\cE{{\cal E}}
\def\cF{{\cal F}}
\def\cG{{\cal G}}
\def\cI{{\cal I}}
\def\cJ{{\cal J}}
\def\cK{{\cal K}}
\def\cL{{\cal L}}
\def\cO{{\cal O}}
\def\cP{{\cal P}}
\def\cQ{{\cal Q}}
\def\cR{{\cal R}}
\def\cS{{\cal \Sigma}}
\def\cT{{\cal T}}
\def\cU{{\cal U}}
\def\cV{{\cal V}}

\def\scT{{_\cT}}
\def\sD{{_D}}
\def\sE{{_E}}
\def\sF{{_F}}
\def\sFz{{_{F_z}}}
\def\sK{{_K}}
\def\sI{{_I}}
\def\sb{{_b}}
\def\sN{{_N}}

\def\curl{{{\bf curl} \ }}
\def\rot{{\mbox{rot}\ }}
\def\BPI{{\bf \Pi}}

\def\cth{\cT_h}
\def\ctH{\cT_H}

\def\tJ{\tilde{\J}}

\def\hK{\widehat{K}}
\def\hx{\widehat{x}}
\def\hy{\widehat{y}}
\def\bhv{\widehat{\bv}}

\def\l{\ell}
\def\bl{\boldsymbol{\ell}}
\def\col{\colon}
\def\f12{\frac12}
\def\dfrac{\displaystyle\frac}
\def\dint{\displaystyle\int}
\def\nab{\nabla}
\def\p{\partial}
\def\sm{\setminus}
\def\dsum{\displaystyle\sum}
\newcommand{\pp}[2]{\frac{\partial {#1}}{\partial {#2}}}
\def\bzero{{\bf 0}}

\def\divv{\nab\cdot}
\def\divx{\nab_x\cdot}
\def\divtx{\nab_{t,x}\cdot}
\def\nabx{\nab_x}

\newcommand{\grad}{\nabla}
\newcommand{\curlt}{{\nabla \times}}
\newcommand{\gperp}{\nabla^{\perp}}
\newcommand{\gradt}{\nabla\cdot}

\def\forallqq{\quad\forall\,}
\def\aph{A^{1/2}}
\def\amh{A^{-1/2}}

\def\osc{{\rm osc \, }}

\def\Im{{\rm Im}}
\newcommand{\tr}{{\rm tr}}
\def\divvr{{\rm div}}
\def\curllr{{\rm curl}}
\def\curll{{\rm curl}}
\def\curl{{\bf curl}}
\newcommand{\bgrad}{{\bf grad}}
\newcommand\diam{\mathrm{diam\,}}
\renewcommand\Im{\mathrm{Im\,}}
\def\Span{\mbox{Span}}
\def\supp{\mbox{supp\,}}
\newcommand{\trace}{{\rm trace}}

\newcommand{\tri}{|\!|\!|}
\newcommand{\ljump}{\lbrack\!\lbrack}
\newcommand{\rjump}{\rbrack\!\rbrack}
\newcommand{\bdm}{\begin{displaymath}}
\newcommand{\edm}{\end{displaymath}}
\newcommand{\beq}{\begin{equation}}
\newcommand{\eeq}{\end{equation}}
\newcommand{\beqa}{\begin{eqnarray}}
\newcommand{\eeqa}{\end{eqnarray}}
\newcommand{\beqas}{\begin{eqnarray*}}
\newcommand{\eeqas}{\end{eqnarray*}}
\newcommand{\ul}{\underline}
\newcommand{\wh}{\widehat}
\newcommand{\la}{\langle}
\newcommand{\ra}{\rangle}

\newcommand{\Lt}{L^2(\Omega)}
\newcommand{\Lts}{L^2(\Omega)^2}
\newcommand{\Ltc}{L^2(\Omega)^3}
\newcommand{\Ho}{H^1(\Omega)}
\newcommand{\Hoh}{H^1(\wh{\Omega})}
\newcommand{\Hoi}{H^1(\Omega_i)}
\newcommand{\Hos}{H^1(\Omega)^2}
\newcommand{\Hoc}{H^1(\Omega)^3}
\newcommand{\Hoch}{H^1(\wh{\Omega})^3}
\newcommand{\Hoci}{H^1(\Omega_i)^3}
\newcommand{\Hoz}{H^1_0(\Omega)}
\newcommand{\Ht}{H^2(\Omega)}
\newcommand{\Hti}{H^2(\Omega_i)}
\newcommand{\Hts}{H^2(\Omega)^2}
\newcommand{\Htc}{H^2(\Omega)^3}
\newcommand{\Htz}{H^0(\Omega)}
\newcommand{\Hh}{H^{1/2}(\Gamma)}
\newcommand{\Hhi}{H^{1/2}(\Gamma_i)}
\newcommand{\Hmh}{H^{-1/2}(\Gamma)}
\newcommand{\Hdiv}{H(\divvr;\,\Omega)}
\newcommand{\Hdivh}{H(\divv;\,\wh \Omega)}
\newcommand{\hcurl}{H(\curl\,A;\,\Omega)}
\newcommand{\Hcurl}{H(\curll\,A;\,\Omega)}
\newcommand{\Hcrl}{H(\curll\,;\,\Omega)}
\newcommand{\hcrl}{H(\curl\,;\,\Omega)}
\newcommand{\Hcrlh}{H(\curll\,;\,\wh\Omega)}
\newcommand{\hcrlh}{H(\curl\,;\,\wh\Omega)}
\newcommand{\Wdiv}{\BW_0(\mbox{\divv}\,;\,\Omega)}
\newcommand{\Wcurl}{\BW_0(\mbox{\curl}\,A;\,\Omega)}
\newcommand{\WcrossV}{\BW \times V}

\def\grad{{\nabla}}

\def\calS{{\cal S}}
\def\calT{{\cal T}}
\def\cA{{\mathcal A}}
\def\cB{{\cal B}}
\def\cD{{\mathcal{D}}}

\def\cH{{\cal H}}
\def\ba{{\mathbf{a}}}

\def\beps{{\mathbf{\epsilon}}}

\def\cM{{\mathcal{M}}}
\def\cN{{\mathcal{N}}}
\def\cT{{\mathcal{T}}}
\def\cE{{\mathcal{E}}}
\def\cP{{\mathcal{P}}}
\def\cF{{\mathcal{F}}}

\def\cB{{\mathcal{B}}}
\def\cG{{\mathcal{G}}}

\def\cL{{\mathcal{L}}}
\def\cJ{{\mathcal{J}}}
\def\cV{{\mathcal{V}}}
\def\cW{{\mathcal{W}}}

\newcommand{\lJump}{[\![}
\newcommand{\rJump}{]\!]}
\newcommand{\jump}[1]{[\![ #1]\!]}

\newcommand{\sd}{\bsigma^{\Delta}}
\newcommand{\rd}{\brho^{\Delta}}

\newcommand{\eps}{\epsilon}
\newcommand{\R}{\rm I\kern-.19emR}

\title [Least-Squares versus Partial Least-Squares Finite Element Methods: Robust Error Analysis]{Least-Squares versus Partial Least-Squares Finite Element Methods: Robust A Priori and A Posteriori Error Estimates of Augmented Mixed Finite Element Methods}
\author[Y. Liang and S. Zhang]{Yuxiang Liang and Shun Zhang}
\address{Department of Mathematics, City University of Hong Kong, Kowloon Tong, Hong Kong, China}
\email{yuxiliang7-c@my.cityu.edu.hk, shun.zhang@cityu.edu.hk}
\thanks{This work was supported in part by
Research Grants Council of the Hong Kong SAR, China, under the GRF Grant Project No. CityU 11316222}
\date{\today}

\keywords{augmented mixed finite element method; least-squares finite element method; Galerkin-Least-Squares method; robust a priori and a posteriori analysis}

\maketitle
\begin{abstract}
In this paper, for the generalized Darcy problem (an elliptic equation with discontinuous coefficients), we study a special  partial Least-Squares (Galerkin-least-squares) method, known as the augmented mixed finite element method, and its relationship to the standard least-squares finite element method (LSFEM). Two versions of augmented mixed finite element methods are proposed in the paper with robust a priori and a posteriori error estimates. Augmented mixed finite element methods and the standard LSFEM uses the same a posteriori error estimator: the evaluations of numerical solutions at the corresponding least-squares functionals. As partial least-squares methods, the augmented mixed finite element methods are more flexible than the original LSFEMs.  As comparisons, we discuss the mild non-robustness of a priori and a posteriori error estimates of the original LSFEMs. A special case that the $L^2$-based LSFEM is robust is also presented for the first time. Extensive numerical experiments are presented to verify our findings.
\end{abstract}

\section{Introduction}\label{intro}
 We consider the following elliptic equation with possible discontinuous coefficients (a generalized Darcy's problem):
\begin{equation}\label{eq_Darcy}
\left\{
\begin{array}{lllll}
\gradt \bsigma    & =& g, & \mbox{in } \O~~\mbox{the constitutive equation},
 \\[1mm]
A\nabla u+ \bsigma  & =& A\bff, & \mbox{in } \O~~\mbox{the equilibrium equation},
\\[1mm]
u &=& 0 & \mbox{on } \Gamma_D\\
\bsigma\cdot\bn & = & 0 &\mbox{on } \Gamma_N,
\end{array}
\right.
\end{equation}
The domain $\O$ is a bounded, open, connected subset of $\mathbb{R}^d (d = 2 \mbox{ or } 3)$ with a Lipschitz continuous boundary $\p\O$. We partition $\p\O$ into two open subsets $\G_D$ and $\G_N$, such that $\p\O = \overline{\G_D} \cup \overline{\G_N}$ and $\G_D\cap \G_N =\emptyset$. For simplicity, we assume that $\G_D$ is not empty (i.e., $\mbox{meas}(\G_D) \neq 0$ ) and is connected.
We assume that the diffusion coefficient matrix $A \in L^{\infty}(\O)^{d\times d}$ is a given $d\times d$ tensor-valued function;  the matrix $A$ is uniformly symmetric positive definite: there exist positive constants $0 < \Lambda_0 \leq \Lambda_1$ such that
\beq\label{A}
\Lambda_0 \by^T\by \leq \by^T A \by \leq \Lambda_1 \by^T\by
\eeq
for all $\by\in \mathbb{R}^d$ and almost all $x\in \O$. 
The righthand sides $g\in L^2(\O)$ and $\bff\in L^2(\O)^d$. 

There are several variational formulations for \eqref{eq_Darcy}. The standard conforming finite element method tries to approximate $u$ in the finite element subspace of its natural space $H^1(\O)$, see \cite{Ciarlet:78,Braess:07,BrSc:08}. If a good approximation of $\bsigma$ in its natural space $H(\divvr)$ is sought, then one can use the mixed finite element approximation based on a dual mixed formulation, see for example, \cite{BBF:13}. In order to approximate both $u$ and $\bsigma$ in their intrinsic spaces $H^1(\O)$ and $H(\divvr)$, respectively, one natural choice is the least-squares finite element method (LSFEM).

The least-squares variational principle and the corresponding LSFEM based on a first-order system reformulation have been widely used in numerical solutions of partial differential equations; see, for example \cite{CLMM:94,CMM:97,Jiang:98,BG:09,CS:04,CLW:04,CFZ:15,LZ:18,LZ:19,QZ:20}. Compared to the standard variational formulation and the related finite element methods, the first-order system LSFEMs have several known advantages, such as the discrete problem is stable without the inf-sup condition of the discrete spaces and mesh size restriction; the first-order system is physically meaningful, thus important physical qualities such as displacement, flux, and stress can be approximated in their intrinsic spaces; and the least-squares functional itself is a good built-in a posteriori error estimator. In Section 4 of \cite{Zhang:23}, as an example of an elliptic equation with an $H^{-1}$-righthand side, the first-order system LSFEM is studied for \eqref{eq_Darcy}. 

On the other hand,  when applying LSFEMs to \eqref{eq_Darcy}, we also face an important issue: robustness. For a coefficient-dependent problem ($A$ for \eqref{eq_Darcy}), the robustness of both the a priori and a posteriori estimates, i.e., genetic constants that appeared in the estimates are independent of the coefficients, is of crucial importance. For the a priori error estimate, it is well known that the model problem \eqref{eq_Darcy} may only have $H^{1+s}$ regularity, with possibly very small $s>0$, see for example, Kellogg \cite{Kellogg:74}. In \cite{CHZ:17}, it is noted that the a priori analysis using the global regularity constant $s$ is not satisfactory since most of the time, the solution only has a low regularity at the elements attached to the singularity but can be very smooth away from the singularity. Thus, we should study the a priori analysis under the local regularity assumption. The a priori error estimate using local regularity is also the base for adaptive finite element methods to achieve an equal-discretization error distribution. For the a posteriori error estimate, we want the a posteriori error estimator with the efficiency constant and the reliability constant to be independent of the parameter of the problem. Unfortunately, both a priori and a posteriori error estimates of the LSFEM applying to \eqref{eq_Darcy} with discontinuous coefficients are not robust; see our discussion in Sections \ref{comparison_1} and \ref{comparison_2}. Beside the above mentioned robustness issue, we often need extra regularity for the right-hand side $g$ in the standard $L^2$-based LSFEM since all the errors of the LSFEM are measured in a combined norm. On the other hand, the standard mixed finite element method for \eqref{eq_Darcy} does not require this, see discussions in \cite{Zhang:20mixed}.

Besides the full bonafide LSFEM, another idea to apply the least-squares philosophy is the so-called Galerkin-Least-Squares (GaLS) method. The GaLS method is a method combining the least-squares and Galerkin methods. Some least-squares terms are added to the original variational formulation to enhance the stability in GaLS methods.  We consider a special GaLS method, the augmented mixed finite element method, in this paper. The central idea of the augmented mixed method is adding consistent least-squares terms to the original mixed formulation to guarantee coercivity or stability.  The first augmented mixed finite element method is introduced in \cite{MH:02}. Earlier contributions of GaLS methods can be found in \cite{Franca:87,FH:88}. The group of Gatica made many important and seminal contributions on developing the augmented mixed finite element method to many problems, see for example \cite{Gatica:06,BGGH:06,FGM:08,CGOT:16,CGO:17,AGR:20}.

As a partial least-squares method, the augmented mixed finite element method shares many properties with the classic LSFEM. It is also stable and approximates the physical quantities in their native spaces. Moreover, as we will see later in the paper, the a posteriori error estimator of the augmented mixed finite element method is a least-squares error estimator. Furthermore, since we only partially use the least-squares principle, the system is more flexible and we can show the robustness of both a priori and a posteriori error estimates. 

This paper proposes two versions of robust augmented mixed finite element methods with two choices of least-squares terms based on the constitutive equation. The first is a simple $L^2$ least-squares term and the second is a mesh-weighted least-squares term. We show the robust and local optimal a priori error estimates for both methods. For the mesh-weighted augmented mixed finite element method, we also show that optimal error can be achieved without requiring high regularity of the righthand side. For both methods, we derive robust a posteriori error estimates. In fact, the error estimators of both versions of augmented mixed finite element methods are least-squares error estimators with corresponding least-squares functionals. As comparisons, we discuss the robustness of two corresponding LSFEMs. For the $L^2$-based LSFEM, we show that the a priori and a posteriori error estimates are robust only under a very special condition. In general, they are not robust. For the mesh-weighted LSFEM, the results are much worse than those of augmented mixed finite element methods since its coercivity constant depends on the mesh size. 

The robust (but not local optimal) a priori  and robust residual type a posteriori error estimate for the energy norm was obtained for the conforming FEM \cite{BeVe:00}. Robust and local optimal {\it a priori} error estimates have also been derived for mixed FEM in \cite{Zhang:20mixed} and nonconforming FEM and discontinuous Galerkin FEM in \cite{CHZ:17} without a restrictive assumption on the distribution of the coefficients. We also present a detailed discussion of the robust and local optimal {\it a priori} error estimate for the conforming FEM in \cite{CHZ:17}. For recovery-based error estimators, robust a posteriori error estimates are obtained by us in \cite{CZ:09,CZ:10a,CYZ:11}. Robust equilibrated error estimators were developed by us in \cite{CZ:12,CCZ:20,CCZ:20mixed}. Robust residual-type of error estimates without a restrictive assumption on the distribution of the coefficients is developed for nonconforming and DG approximations in \cite{CHZ:17}.

In the original paper \cite{MH:02}, both $u$ and $\bsigma$ are approximated by continuous finite elements. As mentioned in \cite{CZ:09,CHZ:17}, the flux $\bsigma$ is not continuous for discontinuous coefficients. Thus the continuous finite element is not a good candidate for the approximation. In \cite{BHMM:05}, a mixed discontinuous Galerkin method is developed using the stabilization in \cite{MH:02}. In \cite{CL:08}, several different stabilization formulations are proposed, with one formulation being our first augmented mixed method, although the authors still emphasized using standard conforming finite elements. In \cite{BCG:15}, $H(\divvr)$ and $H^1$-conforming finite elements are used. In \cite{BCG:15}, an a posteriori error estimator is proposed for the first augmented mixed formulation. The error estimator in \cite{BCG:15} is actually a least-squares error estimator. The robustness of a priori and a posteriori error estimates are not discussed in all previous papers.

The paper is organized as follows. Section 2 describes notations, the function spaces, and local interpolation results. The generalized Darcy problem and the augmented formulations are presented in Section 3, including the robust Cea's lemma. In Section 4, we discuss simplified assumptions on the coefficient $A$ and the quasi-monotonicity assumption and robust quasi-interpolation based on this assumption. In Sections 5 and 6, we present robust a priori and a posteriori error analyses for the first and second augmented mixed formulations, respectively. Connections and comparisons with the corresponding LSFEMs are also discussed in Sections 5 and 6. Numerical experiments are presented in Section 7 to verify the findings in the paper. We make some concluding remarks in Section 8. 

\section{Preliminaries}
\setcounter{equation}{0}

We use the standard notations and definitions for the Sobolev spaces $H^s(\O)$ for $s\ge 0$. The standard associated inner product is denoted by $(\cdot , \, \cdot)_{s,\O}$, and its norm is denoted by $\|\cdot \|_{s,\O}$. The notation $|\cdot|_{s,\O}$ is used for the semi-norm.  (We suppress the superscript $d$ because the dependence on dimension will be clear by context. We also omit the subscript $\O$ from the inner product and norm designation when there is no risk of confusion.) For $s=0$, $H^s(\O)$ coincides with $L^2(\O)$.  The symbols $\gradt$ and $\nabla$ stand for the divergence and gradient operators, respectively. Set $H^1_D(\Omega):=\{v\in H^1(\Omega)\, :\, v=0\,\,\mbox{on }\Gamma_D\}$. We use the standard $H(\divvr;\O)$ space equipped with the norm
$
\|\btau\|_{H(\divvr;\,\O)}=\left(\|\btau\|^2_{0,\O}+\|\gradt\btau\|^2_{0,\O}
 \right)^\frac12.
$
Denote its subspace by
$
H_N(\divvr;\O)=\{\btau\in H(\divvr;\O)\, :\,
\btau\cdot\bn|_{\Gamma_N}=0\},
$
where $\bn$ is the unit vectors outward normal to the boundary $\p\O$. For simplicity, we use the following notation:
\beq
\bX := H_N(\divvr;\,\O) \times H^1_D(\Omega).
\eeq

Let $\cT = \{K\}$ be a triangulation of $\O$ using simplicial elements. The mesh $\cT$ is assumed to be regular.  Let $P_k(K)$ for $K\in\cT$ be the space of polynomials of degree $k$ on an element $K$.
Denote the standard linear and quadratic conforming finite element spaces by 
$$
S_{1,D} = \{v\in  H_D^1(\O): v|_K \in P_1(K),~\forall~K\in\cT \}
\quad\mbox{and}\quad
S_{2,D} = \{v\in H_D^1(\O): v|_T \in P_2(K),~\forall~K\in\cT\},
$$
respectively. 

Denote the local lowest-order Raviart-Thomas (RT) \cite{RT:77} on element $K\in\cT$ by $RT_0(K)=P_0(K)^d +\bx\,P_0(K)$. Then the lowest-order $\Hdiv$ conforming RT space with zero trace on $\Gamma_N$ is defined by
 \[
 RT_{0,N}=\{\btau\in H_N(\divvr;\Omega):
 \btau|_K\in RT_0(K)\,\,\,\,\forall\,\,K\in\cT\}.
 \] 
Similarly,  the lowest-order  $\Hdiv$-conforming Brezzi-Douglas-Marini (BDM) space with zero trace on $\Gamma_N$ is defined by
$$
BDM_{1,N} = \{\btau\in H_N(\divvr;\O): \btau|_K \in P_1(K)^d,~\forall~K \in\cT\}.
$$

We discuss some \noindent{\bf local approximation results} of the standard $S_{1,D}$ and $RT_{0,N}$ spaces.
By Sobolev's embedding theorem, $H^{1+s}(\O)$, with $s>0$ for two dimensions and $s > 1/2$ for three dimensions, is embedded in $C^0(\O)$. Thus, we can define the nodal interpolation $I^{nodal}_h$ of a function $v\in H^{1+s}(\O)$ with $I^{nodal}_hv \in S_{1,D}$ and $I^{nodal}_hv(z) = v(z)$ for a vertex $z\in \cN$. It is important to notice that the nodal interpolation is completely element-wisely defined.  
We have the following local interpolation estimate for the linear nodal interpolation $I^{nodal}_h$ with local regularity $0<s_K\leq 1$ in two dimensions and $1/2<s_K\leq 1$ in there dimensions \cite{DuSc:80,CHZ:17}:
\beq \label{nodal_inter}
\|\nabla(v- I^{nodal}_h v)\|_{0,K} \leq C h_K^{s_K}|\nabla v|_{s_K,K} .
\eeq

Assume that $\btau\in L^r(\O)^d\cap H(\divvr;\O)$, and locally $\btau \in H^{s_K}(K)$ with the local regularity  $1/2<s_K\leq 1$. Let $I^{rt}_h$ be the canonical RT interpolation from $L^r(\O)^d\cap H_N(\divvr;\O)$ to $RT_{0,N}$. Then the following local interpolation estimates hold for local regularity $1/2<s_K\leq 1$ with the constant $C_{rt}$ being unbounded as $s_K\downarrow 1/2$ (see Chapter 16 of \cite{FE1}): 
\beq \label{RT_inter1}
\|\btau- I^{rt}_h\btau\|_{0,K} \leq C_{rt} h_K^{s_K}|\btau|_{s_K,K} \quad \forall K\in \cT.
\eeq
Due to the commutative property of the standard RT interpolation, if we further assuming that $\gradt \btau|_K \in H^{t_K}(K)$, $0<t_K\leq 1$, then
\beq \label{RT_inter2}
\|\gradt (\btau- I^{rt}_h\btau)\|_{0,K} \leq C h_K^{t_K}|\gradt\btau|_{t_K,K} \quad \forall K\in \cT.
\eeq
For $v\in H^3(\O)$, the following local interpolation result in standard for nodal interpolation $I_h$ in $S_{2,D}$,
\beq \label{nodal_inter2}
\|\nabla(v- I_h v)\|_{0,K} \leq C h_K^{2}|v|_{3,K}.
\eeq
Also, we have the following standard interpolation result for the $BDM_{1,N}$, assuming that $I^{bdm}_h$ is the standard $BDM_{1}$ interpolation,
\beq \label{BDM_inter1}
\|\btau- I^{bdm}_h\btau\|_{0,K} \leq C_{bdm} h_K^{2}|\btau|_{2,K} \quad \forall K\in \cT.
\eeq
If we further assuming that $\gradt \btau|_K \in H^{t_K}(K)$, $0<t_K\leq 1$, then
\beq \label{RT_inter2}
\|\gradt (\btau- I^{bdm}_h\btau)\|_{0,K} \leq C h_K^{t_K}|\gradt\btau|_{t_K,K} \quad \forall K\in \cT.
\eeq

The following mesh-dependent notation is used in the paper: for $r\geq 0$,
\beq
\|h^r v\|_0 = \left( \sum_{K\in\cT} h_K^{2r}\|v\|_{0,K}^2 \right)^{1/2} \quad\mbox{and}\quad 
(h^r v,w) =  \sum_{K\in\cT} h_K^r (v,w)_K, \quad
\forall v,w\in L^2(\O).
\eeq

\section{The generalized Darcy problem and the augmented mixed formulations}
\setcounter{equation}{0}

For the solution $u \in  H_D^1(\O)$, the flux $\bsigma = A \bff - A\nabla u$ is  in $L^2(\O)^d$, and $\gradt\bsigma = g$ is  in $L^2(\O)$. Thus,  $\bsigma \in H_N(\divvr\;\O)$. Multiplying an arbitrary $v\in  H_D^1(\O)$ on both sides of the first equation of \eqref{eq_Darcy}, taking integration by parts, and replacing $\bsigma$ by $A \bff - A\nabla u $, we have the standard variational problem of \eqref{eq_Darcy}: find $u\in H^1_D(\O)$, such that
\beq \label{primal}
(A\nabla u,\nabla v)=(g, v)+(A\bff, \nabla v)
\quad\forall v\in H_D^1(\O).
\eeq
%
%
We have two mixed formulations.

\noindent{\bf Dual mixed method}:
Find $(\bsigma, u) \in H_N(\divvr,\O)\times L^2(\O)$ such that
\begin{equation}\label{eq_dm}
\begin{split}
(A^{-1} \bsigma, \btau )- (u, \gradt\btau) & =(\bff, \tau), \quad
\forall\btau \in H_N(\divvr;\O),\\
(\gradt\bsigma,v) & =  (g, v), \quad \forall v \in L^2(\O).
\end{split}
\end{equation}
{\bf Primal mixed method}: Find $(\bsigma, u) \in L^2(\O)^d \times H_D^1(\O)$ such that
\begin{equation}\label{eq_pm}
\begin{split}
(A^{-1}\bsigma, \btau)+(\nabla u, \btau) &=(\bff,\btau), \quad \forall\btau \in L^2(\O)^d \\
-(\bsigma, \nabla v) &=(g, v), \quad\forall v \in H_D^1(\O).
\end{split}
\end{equation}
The finite element approximations of the mixed methods require inf-sup stable finite element pairs.

In the augmented mixed method, we want to approximate $u$ in its natural space $H_D^1(\O)$ and keep the method stable without restricting the choice of finite element subspaces. Testing $\btau \in H_N(\divvr; \O)$ for the first equation of \eqref{eq_Darcy} and $v \in H_D^1(\O)$ for the second equation of \eqref{eq_Darcy}, we have
\beq\label{eq_mixed2}
(A^{-1}\bsigma, \btau)+(\nabla u, \btau)+(\gradt\bsigma, v)= (\bff, \btau)+(g, v), ~\forall (\btau, v) \in \bX. 
\eeq
We add two consistent least-squares terms to \eqref{eq_mixed2}:
\beq\label{eq_aurterm1}
-\k(A^{-1}\bsigma + \nabla u, \btau - A\nabla v) = -\k(\bff,\btau-A\nabla v)
~\mbox{ from the constitutive equation},
\eeq
and
\beq\label{eq_augterm2}
\mu(\a^{-1}\gradt\bsigma,\gradt\btau) = \mu(\a^{-1}g,\gradt\btau)
~\mbox{ from the equilibrium equation},
\eeq
where 
\beq
\a(x) = \trace(A(x))/d.
\eeq
Note that $\trace(A)$ equals to the sum of its eigenvalues. Since it is assumed that $A$ is symmetric and positive definite, the scalar function $\a(x)$ is between the minimum and maximum of eigenvalues of $A(x)$ for all $x\in \O$. 

Then we obtain the following problem: find $(\bsigma,u) \in \bX$, such that 
\beq\label{eq_bb}
B((\bsigma,u),(\btau,v))
= (\bff,\btau) + (g,v) -\k(\bff,\btau-A\nabla v) + \mu(\a^{-1}g,\gradt\btau)\quad \forall(\btau,v)\in \bX
\eeq
with the bilinear form $B$ defined as follows, for $(\bchi,w) \in \bX$ and $(\btau,v) \in \bX$,
\beq\notag
B((\bchi,w),(\btau,v)) :=(A^{-1}\bchi,\btau) + (\nabla w,\btau) +(\gradt\bchi,v)-\kappa(A^{-1}\bchi+\nabla w,\btau-A\nabla v) + \mu(\a^{-1}\gradt\bchi,\gradt\btau).
\eeq
Let $(\bsigma,u) = (\btau,v)$ in $B((\bsigma,u),(\btau,v))$, and use the fact that
\beq\label{eq_interbyparts}
(\nabla v,\btau)+(\gradt\btau,v)=0 \quad\forall(\btau,v)\in \bX,
\eeq
We get
\beq\notag
B((\btau,v),(\btau,v)) = (1-\kappa)\|A^{-1/2}\btau\|^2_0 +\mu\|\a^{-1/2}\gradt\btau\|^2_0+\k\|A^{1/2}\nabla u\|_0^2.
\eeq
To have the coercivity, $1-\kappa$ should be positive. For convenience, we let $\kappa=1/2$. Then by \eqref{eq_interbyparts}, \eqref{eq_bb} can be written as: find $(\bsigma,u) \in \bX$, such that 
\beq\label{eq_varfrom}
B_\theta((\bsigma,u),(\btau,v)) = F_\theta(\btau,v)\quad\forall (\btau,v)\in \bX,
\eeq
where, for $(\bchi,w) \in \bX$ and $(\btau,v) \in \bX$, the bilinear form $B_\theta$ and the linear form $F_\theta$ are defined as follows:
\begin{eqnarray}
\label{Btheta1}
B_\theta((\bchi,w),(\btau,v)) &=&(A^{-1}\bchi,\btau) + (A\nabla w,\nabla v) + (\nabla w, \btau ) - (\bchi, \nabla v) + (\theta\a^{-1}\gradt\bchi,\gradt\btau)\\ \label{Btheta2}
&=&(A^{-1}\bchi+\nabla w,\btau+A\nabla v)- 2(\bchi,\nabla v)+ (\theta\a^{-1}\gradt\bchi,\gradt\btau), \\ \label{Ftheta}
F_\theta(\btau,v) &=& (\bff,\btau+A\nabla v)+2(g,v)+(\theta\a^{-1}g,\gradt\btau).
\end{eqnarray}
Note that $B_\theta$ is not symmetric. We will give an equivalent symmetric version in Section \ref{sym_form}. We will discuss two choices of $\theta$, $\theta =1$ and a mesh dependent $\theta$, such that $\theta|_K = h^2_K$ for all $K\in\cT$. 

\begin{rem} In fact, we have a third case that $\theta=0$. However, unlike the first two cases, this case has no corresponding least-squares formulation. We also cannot associate its a posteriori error estimator with a corresponding least-squares error estimator. Thus, we will not discuss this choice in the paper. Some discussions of the case $\theta=0$ can be found in \cite{MH:02,CL:08}.
\end{rem}

The formulas \eqref{Btheta1} and \eqref{Btheta2} are two equivalent ways to write the bilinear form. 
\subsection{Some analysis for the augmented mixed formulations} \label{analysis}
Define
\beq \label{thetanorm}
\tri(\btau,v)\tri_\theta:= (\|A^{1/2}\nabla v\|^2_0+\|A^{-1/2}\btau\|^2_0 +\|\sqrt{\theta/\a}\gradt\btau\|^2_0)^{1/2}\quad \forall (\btau,v)\in \bX.
\eeq
By the definition of the bilinear forms $B_\theta$ in \eqref{Btheta1}, we immediately have the coercivity:
\beq\label{ine_coe1}
B_\theta((\btau,v),(\btau,v))= \tri(\btau,v)\tri_\theta^2 \quad \forall (\btau,v)\in \bX.
\eeq
It is also easy to derive the continuity of $B_\theta$: 
\begin{eqnarray}  \nonumber
B_\theta((\bchi,w), (\btau,v)) &\leq&\|A^{-1/2}\bchi\|_0 \|A^{-1/2}\btau\|_0 + \|A^{1/2}\nabla w\|_0 \|A^{1/2}\nabla v\|_0 + \|A^{1/2}\nabla w\|_0 \|A^{-1/2}\btau\|_0  \\\nonumber
&& + \|A^{-1/2}\bchi\|_0 \|A^{1/2}\nabla v\|_0  +\|\sqrt{\theta/\a}\gradt\bchi\|_0\|\sqrt{\theta/\a}\gradt\btau\|_0 \\\nonumber
&=& (\|A^{-1/2}\bchi\|_0+\|A^{1/2}\nabla w\|_0) (\|A^{-1/2}\btau\|_0+\|A^{1/2}\nabla v\|_0)+\|\sqrt{\theta/\a}\gradt\bchi\|_0\|\sqrt{\theta/\a}\gradt\btau\|_0 \\ \label{ine_conA1} 
&\leq& 2 \tri(\bchi,w)\tri_\theta\tri(\btau,v)\tri_\theta,\quad \forall (\bchi,w),(\btau,v)\in \bX
\end{eqnarray}
With the coercivity and continuity of the bilinear form $B_\theta$, by the Lax-Milgram Lemma,
\eqref{eq_varfrom} has a unique solution $(\bsigma,u)\in \bX$.

Let $\Sigma_{h,N}\subset H_N(\divvr;\O)$ and $V_{h,D}\subset H^1_D(\O)$ be two finite dimensional subspaces, then we have the following discrete problem:
find $(\bsigma_{h},u_{h}) \in \Sigma_{h,N}\times V_{h,D}$ such that
\beq \label{disweak}
B_\theta((\bsigma_{h},u_{h}),(\btau_h,v_h))=F_\theta(\btau_h,v_h), \quad \forall (\btau_h,v_h)\in \Sigma_{h,N}\times V_{h,D}.
\eeq
Since $\Sigma_{h,N}\times V_{h,D} \subset \bX$, we have the well-posedness of the discrete problems \eqref{disweak}. Also, the following Galerkin orthogonality is true:
\beq\label{eq_ortho}
B_\theta((\bsigma-\bsigma_{h},u-u_{h}),(\btau_h,v_h))=0, \quad \forall (\btau_h,v_h)\in \Sigma_{h,N}\times V_{h,D}.
\eeq
The following Cea's lemma type of best approximation property is also true.
\begin{thm}\label{a_priori}
Assume that $(\bsigma_{h},u_{h})$ is the solution of problem \eqref{disweak}, and $(\bsigma,u)$ is the solution of the problem \eqref{eq_Darcy}. The following best-approximation result is true:
\begin{eqnarray}\label{apriori}
\tri(\bsigma-\bsigma_{h},u-u_{h})\tri_\theta &\leq& 2\inf_{(\btau_h,v_h)\in \Sigma_{h,N}\times V_{h,D}}\tri(\bsigma-\btau_h,u-v_h)\tri_\theta.
\end{eqnarray}
\end{thm}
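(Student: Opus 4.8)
The plan is to run the standard Lax--Milgram/C\'ea argument, relying only on the three ingredients already established: the coercivity identity \eqref{ine_coe1}, the continuity bound \eqref{ine_conA1}, and the Galerkin orthogonality \eqref{eq_ortho}. Although $B_\theta$ is not symmetric, symmetry plays no role in this estimate; coercivity and continuity alone suffice, provided one is careful about which slot of $B_\theta$ carries the error and which carries the discrete test function.

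First I would fix an arbitrary pair $(\btau_h,v_h)\in \Sigma_{h,N}\times V_{h,D}$ and record the elementary splitting
\[
(\btau_h-\bsigma_{h},\,v_h-u_{h})=(\bsigma-\bsigma_{h},\,u-u_{h})-(\bsigma-\btau_h,\,u-v_h),
\]
noting that the left-hand pair lies in the discrete subspace $\Sigma_{h,N}\times V_{h,D}$. Applying the coercivity identity \eqref{ine_coe1} to the error gives
\[
\tri(\bsigma-\bsigma_{h},u-u_{h})\tri_\theta^2 = B_\theta\bigl((\bsigma-\bsigma_{h},u-u_{h}),(\bsigma-\bsigma_{h},u-u_{h})\bigr).
\]
Since the above splitting shows that $(\bsigma-\bsigma_{h},u-u_{h})-(\bsigma-\btau_h,u-v_h)$ is a discrete function, Galerkin orthogonality \eqref{eq_ortho} lets me replace the second argument by $(\bsigma-\btau_h,u-v_h)$ without changing the value, yielding
\[
\tri(\bsigma-\bsigma_{h},u-u_{h})\tri_\theta^2 = B_\theta\bigl((\bsigma-\bsigma_{h},u-u_{h}),(\bsigma-\btau_h,u-v_h)\bigr).
\]

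Finally I would bound the right-hand side by the continuity estimate \eqref{ine_conA1}, which produces the factor $2\,\tri(\bsigma-\bsigma_{h},u-u_{h})\tri_\theta\,\tri(\bsigma-\btau_h,u-v_h)\tri_\theta$; cancelling one power of $\tri(\bsigma-\bsigma_{h},u-u_{h})\tri_\theta$ (the inequality being trivial if the error vanishes) and taking the infimum over all $(\btau_h,v_h)\in \Sigma_{h,N}\times V_{h,D}$ gives \eqref{apriori}, with the constant $2$ coming directly from the continuity constant. There is essentially no genuine obstacle here, as all the analytic content has already been packaged into \eqref{ine_coe1}, \eqref{ine_conA1}, and \eqref{eq_ortho}; the only delicate point is the bookkeeping in the orthogonality step, namely keeping the full error in the first argument of $B_\theta$ and the discrete increment in the second, which is exactly what makes the non-symmetry of $B_\theta$ harmless.
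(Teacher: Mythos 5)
Your proposal is correct and follows exactly the paper's own argument: coercivity \eqref{ine_coe1}, Galerkin orthogonality \eqref{eq_ortho} to swap the second argument for $(\bsigma-\btau_h,u-v_h)$, continuity \eqref{ine_conA1} with constant $2$, cancellation, and the infimum. No differences worth noting.
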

\begin{proof}
Let $(\btau_h,v_h)\in \Sigma_{h,N}\times V_{h,D}$. Using the coercivity and the continuity of the bilinear form, the Galerkin orthogonality, and the Cauchy-Schwarz inequality, we have
\begin{eqnarray*}
\tri(\bsigma-\bsigma_{h},u-u_{h})\tri^2_\theta &=& B_\theta((\bsigma-\bsigma_{h},u-u_{h}),(\bsigma-\bsigma_{h},u-u_{h})) =B_\theta((\bsigma-\bsigma_{h},u-u_{h}),(\bsigma-\btau_{h},u-v_{h}))\\
&\leq&2\tri(\bsigma-\bsigma_{h},u-u_{h})\tri_\theta \tri(\bsigma-\btau_h,u-v_h)\tri_\theta,
\end{eqnarray*}
which implies \eqref{apriori}.
\end{proof}
To derive the a posteriori error estimate, we need the following lemma.
\begin{lem}\label{errorrep} (Error representation) 
Let $(\bsigma,u)$ be the solution of \eqref{eq_Darcy},  $(\bsigma_{h},u_{h})$ be the solution of \eqref{disweak}, and $v_h\in  V_{h,D}$ be an arbitrary function in the discrete space $V_{h,D}$. We have the following error representation with $\bE = \bsigma-\bsigma_{h}$ and $e= u-u_{h}$:
\beq \label{error_rep}
\tri(\bE,e)\tri^2_\theta =  (\bff-A^{-1}\bsigma_{h}-\nabla u_{h},\bE+A\nabla (e- v_h))+(\theta \a^{-1}(g-\gradt\bsigma_{h}),\gradt\bE) +2(g-\gradt\bsigma_{h},e- v_h).
\eeq
\end{lem}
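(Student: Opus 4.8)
The plan is to turn the energy of the error into a computable residual paired against the error, using only coercivity, Galerkin orthogonality, and the continuous variational problem. I would start from the coercivity identity \eqref{ine_coe1}, which gives
$\tri(\bE,e)\tri_\theta^2 = B_\theta((\bE,e),(\bE,e))$, and then rewrite the right-hand side so that the data $\bff$, $g$ and the discrete pair $(\bsigma_h,u_h)$ appear explicitly.

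First I would inject the free function $v_h$ via the Galerkin orthogonality \eqref{eq_ortho}. Applying it to the admissible discrete test pair $(0,v_h)\in\Sigma_{h,N}\times V_{h,D}$ (the zero flux together with the given $v_h$) yields $B_\theta((\bE,e),(0,v_h))=0$, so by linearity of $B_\theta$ in its second slot I may replace the second argument $(\bE,e)$ by $(\bE,e-v_h)$ at no cost: $B_\theta((\bE,e),(\bE,e))=B_\theta((\bE,e),(\bE,e-v_h))$. Next I would use that the exact pair $(\bsigma,u)$ solves \eqref{eq_varfrom}, which gives, for any $(\btau,v)\in\bX$, the residual identity $B_\theta((\bE,e),(\btau,v))=F_\theta(\btau,v)-B_\theta((\bsigma_h,u_h),(\btau,v))$. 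Substituting $(\btau,v)=(\bE,e-v_h)$ and expanding $F_\theta$ through \eqref{Ftheta} together with $B_\theta((\bsigma_h,u_h),\cdot)$ through the combined form \eqref{Btheta2}, the terms that pair against $\btau+A\nabla v$ collapse into the single constitutive residual $(\bff-A^{-1}\bsigma_h-\nabla u_h,\bE+A\nabla(e-v_h))$, while the mesh-weighted contributions combine into $(\theta\a^{-1}(g-\gradt\bsigma_h),\gradt\bE)$.

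The remaining piece is the leftover $2(g,e-v_h)+2(\bsigma_h,\nabla(e-v_h))$, arising from the $2(g,v)$ term of $F_\theta$ and the $-2(\bchi,\nabla v)$ term of \eqref{Btheta2}. Here I would invoke the integration-by-parts identity \eqref{eq_interbyparts}: since $\bsigma_h\in\Sigma_{h,N}\subset H_N(\divvr;\O)$ and $e-v_h\in H^1_D(\O)$, one has $(\bsigma_h,\nabla(e-v_h))=-(\gradt\bsigma_h,e-v_h)$, so this block becomes exactly $2(g-\gradt\bsigma_h,e-v_h)$. Collecting the three blocks reproduces \eqref{error_rep}.

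I expect the only step demanding real care to be this last bookkeeping: one must track the factor of $2$ and apply \eqref{eq_interbyparts} with the correct pairing so that the equilibrium residual $g-\gradt\bsigma_h$ emerges cleanly; the rest is linearity of the forms plus the substitution of the exact equation. Everything above uses only objects already shown to be well defined, so no additional regularity or stability argument is needed.
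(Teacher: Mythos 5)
Your proposal is correct and follows essentially the same route as the paper: coercivity \eqref{ine_coe1}, Galerkin orthogonality against $(0,v_h)$ to pass to $(\bE,e-v_h)$, the residual identity $F_\theta-B_\theta((\bsigma_h,u_h),\cdot)$, and the integration-by-parts identity \eqref{eq_interbyparts} to convert $2(\bsigma_h,\nabla(e-v_h))$ into $-2(\gradt\bsigma_h,e-v_h)$. The bookkeeping of the factor of $2$ and the sign in the last step is exactly as the paper carries it out.
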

\begin{proof}
Let $ \tilde{e}= e- v_h$.
By the fact $\bE \in H_N(\divvr;\O)$ and $e\in H_D^1(\O)$,  the coercivity \eqref{ine_coe1}, the Galerkin orthogonality 
\beq \label{GO}
B_\theta((\bE,e),(0,v_h)) = 0, \quad \forall v_h\in  V_{h,D},
\eeq 
the definitions of $B_\theta$ \eqref{Btheta1} and $F_\theta$ \eqref{Ftheta}, and the integration by parts, we get
\begin{eqnarray*}
\tri(\bE,e)\tri^2_\theta &=& B_\theta((\bE,e),(\bE,e))=B_\theta((\bE,e),(\bE,\tilde{e}))
= F_\theta(\bE,\tilde{e}) - B_\theta((\bsigma_{h},u_{h}),(\bE,\tilde{e}))\\
&=& (\bff-A^{-1}\bsigma_{h}-\nabla u_{h},\bE+A\nabla \tilde{e})+(\theta \a^{-1}(g-\gradt\bsigma_{h}),\gradt\bE)+2(g,\tilde{e}) +2(\bsigma_h,\nabla \tilde{e}) \\
&=& (\bff-A^{-1}\bsigma_{h}-\nabla u_{h},\bE+A\nabla \tilde{e})+(\theta \a^{-1}(g-\gradt\bsigma_{h}),\gradt\bE) +2(g-\gradt\bsigma_{h},\tilde{e}).
\end{eqnarray*}
The lemma is then proved.
\end{proof}

\subsection{Symmetric formulations} \label{sym_form}
The formulation \eqref{eq_varfrom} is non-symmetric. In many situations, for example, eigenvalues problems or developing efficient linear solvers, symmetric formulations are always preferred. Also, we can always associate a Ritz-minimization variational principle to a symmetric problem. Luckily, the method  \eqref{eq_varfrom}  is equivalent to a symmetric GLS formulation by adding least-squares residuals
$$
-\dfrac{1}{2}(A^{-1}\bsigma + \nabla u, \btau + A\nabla v) = -\dfrac{1}{2}(\bff,\btau+A\nabla v)
\quad\mbox{and}\quad
\dfrac{1}{2}(\a^{-1}\gradt\bsigma,\gradt\btau) = \dfrac{1}{2}(\a^{-1}g,\gradt\btau)
$$
to the following symmetric saddle point mixed formulation: find $(\bsigma, u) \in\bX$, such that
$$
(A^{-1}\bsigma, \btau)-(\gradt \btau, u)-(\gradt\bsigma, v)= (\bff, \btau)-(g, v), ~\forall (\btau, v) \in\bX.
$$
We have the following symmetric formulation: find $(\bsigma, u) \in\bX$, such that
\beq \label{eq_varfrom_sym}
B_{sym,\theta}((\bsigma, u),(\btau,v))= F_{sym,\theta}(\btau,v), \quad \forall (\btau, v) \in\bX,
\eeq
with the forms are defined for $(\bchi,w) \in \bX$ and $(\btau,v) \in \bX$,
\begin{eqnarray}
\label{Btheta_eym}
B_{sym,\theta}((\bchi,w),(\btau,v)) &:=&(A^{-1}\bchi,\btau)  + (\nabla w, \btau ) + (\bchi, \nabla v) -(A\nabla w,\nabla v) + (\theta\a^{-1}\gradt\bchi,\gradt\btau).\\ 
\label{Ftheta_eym}
F_{sym,\theta}(\btau,v) &:=& (\bff,\btau-A\nabla v)-2(g,v)+(\theta\a^{-1}g,\gradt\btau).
\end{eqnarray}
Let $\Sigma_{h,N}\subset H_N(\divvr;\O)$ and $V_{h,D}\subset H^1_D(\O)$ be two finite dimensional subspaces, then we have the following discrete problem corresponding to \eqref{eq_varfrom_sym}:
find $(\bsigma_{h},u_{h}) \in \Sigma_{h,N}\times V_{h,D}$ such that
\beq \label{disweak_sym}
B_{sym,\theta}((\bsigma_{h},u_{h}),(\btau_h,v_h))=F_{sum,\theta}(\btau_h,v_h), \quad \forall (\btau_h,v_h)\in \Sigma_{h,N}\times V_{h,D}.
\eeq

It is easy to see that \eqref{eq_varfrom} and  \eqref{eq_varfrom_sym} are equivalent since replacing the test function $v$ by $-v$ in one formulation leads to the other formulation. By doing so, we know that \eqref{eq_varfrom} and  \eqref{eq_varfrom_sym} and their corresponding finite element formulations \eqref{disweak} and \eqref{disweak_sym} produce identical solutions. Thus all the analysis of the non-symmetric formulations can be applied to the symmetric versions.

Alternatively, we can establish the inf-sup stability of the symmetric formulation directly.

\begin{lem}
The following robust inf-sup stability with the stability constant being $1$ holds:
\beq \label{infsup}
\inf_{(\bchi,w)\in \bX}\sup_{(\btau,v)\in \bX} \dfrac{B_{sym,\theta}((\bchi,w),(\btau,v))}{\tri (\bchi,w) \tri_{\theta}  \tri (\btau,v) \tri_{\theta} }
=
\inf_{(\btau,v)\in \bX}\sup_{(\bchi,w)\in \bX} \dfrac{B_{sym,\theta}((\bchi,w),(\btau,v))}{\tri (\bchi,w) \tri_{\theta}  \tri (\btau,v) \tri_{\theta} }
 \geq 1.
\eeq
\end{lem}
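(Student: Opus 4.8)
The plan is to exploit two structural features of $B_{sym,\theta}$: its symmetry and its $v\mapsto -v$ correspondence with the coercive non-symmetric form $B_\theta$. First I would observe that, because $A$ is symmetric, the form $B_{sym,\theta}$ is itself symmetric: each of the terms $(A^{-1}\bchi,\btau)$, $(A\nabla w,\nabla v)$, and $(\theta\a^{-1}\gradt\bchi,\gradt\btau)$ is symmetric in its two arguments, while the coupling contribution $(\nabla w,\btau)+(\bchi,\nabla v)$ is visibly invariant under the swap $(\bchi,w)\leftrightarrow(\btau,v)$. Relabelling dummy variables then shows that the two inf-sup quotients in \eqref{infsup} coincide, so it suffices to bound the first one from below by $1$.

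For that bound I would use an explicit supremizer: given $(\bchi,w)\in\bX$, take the test pair $(\btau,v)=(\bchi,-w)$ and evaluate $B_{sym,\theta}$ directly from \eqref{Btheta_eym}. The decisive cancellation is that the two coupling terms collapse, $(\nabla w,\bchi)-(\bchi,\nabla w)=0$, while flipping the sign of $w$ converts the indefinite term $-(A\nabla w,\nabla v)$ into $+(A\nabla w,\nabla w)=\|A^{1/2}\nabla w\|_0^2$, leaving exactly
$$
B_{sym,\theta}((\bchi,w),(\bchi,-w))=\|A^{-1/2}\bchi\|_0^2+\|A^{1/2}\nabla w\|_0^2+\|\sqrt{\theta/\a}\gradt\bchi\|_0^2=\tri(\bchi,w)\tri_\theta^2.
$$
Equivalently, comparing \eqref{Btheta1} with \eqref{Btheta_eym} gives the identity $B_{sym,\theta}((\bchi,w),(\btau,v))=B_\theta((\bchi,w),(\btau,-v))$, so this value is simply $B_\theta((\bchi,w),(\bchi,w))=\tri(\bchi,w)\tri_\theta^2$ by the coercivity identity \eqref{ine_coe1}. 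Since the $v$-part of $\tri\cdot\tri_\theta$ depends on $w$ only through $\nabla w$, the supremizer satisfies $\tri(\bchi,-w)\tri_\theta=\tri(\bchi,w)\tri_\theta$; dividing, the inner supremum is at least $\tri(\bchi,w)\tri_\theta$, the quotient in \eqref{infsup} is at least $1$, and taking the infimum over $(\bchi,w)$ together with the symmetry observation completes the proof.

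I do not expect a serious obstacle, as the argument reduces to an algebraic identity. The single point demanding care is the sign flip: the naive choice $(\btau,v)=(\bchi,w)$ fails because it produces the term $-\|A^{1/2}\nabla w\|_0^2$ and loses coercivity. Recognizing that testing against $(\bchi,-w)$ — equivalently, exploiting the $v\mapsto -v$ identification of $B_{sym,\theta}$ with the coercive $B_\theta$ — is precisely what makes the indefinite $-(A\nabla w,\nabla v)$ term contribute with the correct sign, and is the one genuinely essential idea in the proof.
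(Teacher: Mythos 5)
Your proposal is correct and follows essentially the same route as the paper: reduce to one inf-sup condition by symmetry, then test with $(\btau,v)=(\bchi,-w)$ and invoke the identity $B_{sym,\theta}((\bchi,w),(\bchi,-w))=B_{\theta}((\bchi,w),(\bchi,w))=\tri(\bchi,w)\tri_\theta^2$ together with $\tri(\bchi,-w)\tri_\theta=\tri(\bchi,w)\tri_\theta$. Your explicit remark that the naive choice $(\bchi,w)$ fails because of the sign of $-(A\nabla w,\nabla v)$ is a useful clarification the paper leaves implicit.
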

\begin{proof}
Due to the fact $B_{sym,\theta}$ is symmetric, we only need to show one  inf-sup condition in \eqref{infsup}.
Let $(\btau,v) =  (\bchi,-w)$ and using the facts that $B_{\theta}((\bchi,w),(\bchi,w) = B_{sym,\theta}((\bchi,w),(\bchi,-w))$ and \eqref{ine_coe1} then 
\begin{eqnarray*}
\sup_{(\btau,v)\in \bX} \dfrac{B_{sym,\theta}((\bchi,w),(\btau,v))}{\tri (\btau,v) \tri_{\theta} } &\geq &\dfrac{B_{sym,\theta}((\bchi,w),(\bchi,-w))}{\tri (\bchi,w) \tri_{\theta} } 
= \dfrac{B_{\theta}((\bchi,w),(\bchi,w))}{\tri (\bchi,w) \tri_{\theta} } =  \dfrac{\tri (\bchi,w) \tri_{\theta}^2}{\tri (\bchi,w) \tri_{\theta} } = \tri (\bchi,w) \tri_{\theta}.
\end{eqnarray*}
\end{proof}
Similarly, we have the following robust discrete inf-sup stability with $\bX_h = \Sigma_{h,N}\times V_{h,D}$:
\beq
\inf_{(\bchi_h,w_h)\in \bX_h} \sup_{(\btau_h,v_h)\in \bX_h} \dfrac{B_{sym,\theta}((\bchi_h,w_h),(\btau_h,v_h))}{\tri (\bchi_h,w_h) \tri_{\theta} \tri (\btau_h,v_h) \tri_{\theta} } 
=\inf_{(\btau_h,v_h)\in\bX_h} \sup_{(\bchi_h,w_h)\in \bX_h} \dfrac{B_{sym,\theta}((\bchi_h,w_h),(\btau_h,v_h))}{\tri (\bchi_h,w_h) \tri_{\theta} \tri (\btau_h,v_h) \tri_{\theta} } 
\geq 1
.
\eeq
Also, it is easy to show the continuity of $B_{sym,\theta}$ as in \eqref{ine_conA1}:
\beq  \label{ine_conA1_sym} 
B_{sym,\theta}((\bchi,w), (\btau,v))
\leq 2 \tri(\bchi,w)\tri_\theta\tri(\btau,v)\tri_\theta,\quad \forall (\bchi,w),(\btau,v)\in \bX.
\eeq
Using the theorem in \cite{XZ:03}, we immediately have following robust best approximation theorem, which is identical to Theorem \ref{a_priori}.
\begin{thm}
Assume that $(\bsigma_{h},u_{h})$ is the solution of problem \eqref{disweak_sym}, and $(\bsigma,u)$ is the solution of the problem \eqref{eq_Darcy}. The following robust best-approximation result is true:
\begin{eqnarray}\label{apriori_sym}
\tri(\bsigma-\bsigma_{h},u-u_{h})\tri_\theta &\leq& 2\inf_{(\btau_h,v_h)\in \Sigma_{h,N}\times V_{h,D}}\tri(\bsigma-\btau_h,u-v_h)\tri_\theta.
\end{eqnarray}
\end{thm}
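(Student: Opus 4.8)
The plan is to deduce the estimate from the abstract best-approximation theorem of Xu and Zikatanov \cite{XZ:03}, whose three hypotheses are already in hand. First, $B_{sym,\theta}$ is bounded on $(\bX,\tri\cdot\tri_\theta)$ with continuity constant $M=2$, by \eqref{ine_conA1_sym}. Second, the discrete inf-sup stability of $B_{sym,\theta}$ on $\bX_h=\Sigma_{h,N}\times V_{h,D}$ holds with stability constant $\gamma_h\ge 1$, as established just above. Third, since $\bX_h\subset\bX$, subtracting \eqref{disweak_sym} from \eqref{eq_varfrom_sym} gives the Galerkin orthogonality $B_{sym,\theta}((\bsigma-\bsigma_h,u-u_h),(\btau_h,v_h))=0$ for all $(\btau_h,v_h)\in\bX_h$. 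I would first record these facts, noting that $\tri\cdot\tri_\theta$ is genuinely induced by an inner product on $\bX$ (a sum of squared $L^2$-type norms, vanishing only at $(0,0)$ because $v=0$ on $\Gamma_D$ forces $v\equiv 0$), so that the Hilbert-space setting of \cite{XZ:03} applies.

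With these in place, the Xu--Zikatanov theorem yields directly
\[
\tri(\bsigma-\bsigma_h,u-u_h)\tri_\theta \le \frac{M}{\gamma_h}\inf_{(\btau_h,v_h)\in\bX_h}\tri(\bsigma-\btau_h,u-v_h)\tri_\theta \le 2\inf_{(\btau_h,v_h)\in\bX_h}\tri(\bsigma-\btau_h,u-v_h)\tri_\theta,
\]
the last step using $M=2$ and $\gamma_h\ge 1$, hence $M/\gamma_h\le 2$.

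The step I expect to be delicate --- and the reason I rely on \cite{XZ:03} rather than a hand-rolled argument --- is the sharpness of the constant. The classical inf-sup route (choose $(\bchi_h,w_h)=(\bsigma_h-\btau_h,u_h-v_h)$, test against a near-supremizer, apply Galerkin orthogonality and continuity, then use the triangle inequality) produces only the weaker factor $1+M/\gamma_h=3$. To reach the optimal $M/\gamma_h=2$ one must instead view the discrete solution map as an idempotent $P_h$ (the Galerkin projection onto $\bX_h$) on the Hilbert space $(\bX,\tri\cdot\tri_\theta)$, bound its operator norm by $M/\gamma_h$, and invoke Kato's identity that a nontrivial projection and its complement have equal operator norms; then $\tri(\bsigma-\bsigma_h,u-u_h)\tri_\theta=\tri(I-P_h)(\bsigma-\btau_h,u-v_h)\tri_\theta$ is controlled by the operator norm of $P_h$ times $\tri(\bsigma-\btau_h,u-v_h)\tri_\theta$. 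This projection-norm identity is precisely the content of \cite{XZ:03} and is the only non-routine ingredient. Finally, as a consistency check I would note that replacing $v$ by $-v$ converts \eqref{eq_varfrom_sym} into \eqref{eq_varfrom} and conversely, so the discrete schemes \eqref{disweak_sym} and \eqref{disweak} produce identical pairs $(\bsigma_h,u_h)$; the bound above is therefore literally Theorem \ref{a_priori}, re-derived through the robust inf-sup framework rather than through coercivity, and the two routes must return the same constant.
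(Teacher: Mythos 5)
Your proposal is correct and follows exactly the route the paper takes: the paper derives this theorem as an immediate consequence of the Xu--Zikatanov result in \cite{XZ:03}, using the continuity bound \eqref{ine_conA1_sym} with constant $M=2$ and the discrete inf-sup stability with constant $\gamma_h\ge 1$ to obtain the factor $M/\gamma_h\le 2$. Your additional remarks on why the projection-norm identity yields $M/\gamma_h$ rather than $1+M/\gamma_h$, and on the consistency with Theorem \ref{a_priori} via the substitution $v\mapsto -v$, correctly fill in the details the paper leaves implicit.
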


\begin{rem}
The formulation \eqref{eq_varfrom_sym} (with $\theta=1$ and $0$) can be found in Section 4.1 (Symmetric stabilizations in $H(\divvr;\O)\times H^1(\O)$) of \cite{CL:08}. The paper \cite{CL:08} mainly wanted to use continuous finite elements to approximate both $\bsigma$ and $u$. It did mention the possible usage of $H(\divvr)$-conforming finite elements. Robust a priori error estimate with respect to the coefficient matrix $A$  is not sought in \cite{CL:08}. A posteriori error estimate is not discussed in \cite{CL:08}.
\end{rem}

\section{Assumptions on the coefficient matrix $A$ and Robust Interpolations}
\setcounter{equation}{0}
In the remaining part of the paper, for simplicity of the presentation, we assume the following assumption on $A$: 
\begin{assumption} \label{asmp_A} {\bf Piecewise constant assumption on $A$.}
We assume that $A=\a(x)I$ where $\a(x)$ is positive and piecewise constant function in $\O$ with possible large jumps across subdomain boundaries (interfaces):
$$
\a(x)=\a_i >0 \mbox{ in } \O_i
$$
for $i = 1, \cdots, n$. Here, $\{\O_i\}_{i=1}^n$ is a partition of the domain $\O$ with $\O_i$ being an open polygonal domain. 
\end{assumption}

\begin{rem} For the more general cases of $A$, the analysis in this paper will still be valid. However, the genetic constants appeared in the paper will depend on the ratio $\lambda_{\max,K}/\lambda_{\min,K}$, for all $K\in\cT$, where $\lambda_{\max,K}$ and $\lambda_{\min,K}$ are the respective maximal and minimal eigenvalues of $A_K := A|_K$. See discussion in \cite{BeVe:00,CHZ:21}. 
\end{rem}

We then discuss the quasi-monotonicity assumption and robust quasi-interpolation based on this assumption.
\begin{assumption} \label{asmp_QMA} {\bf Quasi-monotonicity assumption (QMA).} Assume that any two different subdomains $\overline{\O}_i$ and  $\overline{\O}_i$, which share at least one point, have a connected path passing from  $\overline{\O}_i$ to  $\overline{\O}_j$ through adjacent subdomains such that the diffusion coefficient $\a(x)$ is monotone along this path.
\end{assumption}
It is also common to use Cl\'{e}ment-type interpolation operators (see, e.g., \cite{BeVe:00,Pet:02}) for establishing the reliability bound of a posteriori error estimators. Following \cite{BeVe:00}, one can define the interpolation operator $I_{rcl}: L^2(\O)\rightarrow S^1_D$ (see \cite{CZ:09}  for more details) so that the following estimates are true under Assumption \ref{asmp_QMA} (QMA):
\beq \label{rcl}
 \|\a_K^{\frac12}(v-I_{rcl} v)\|_{0,K} +h_K\|\a_K^{\frac12}\nabla (v-I_{rcl} v)\|_{0,K} \leq C\,h_K \| \a^{\frac12}\nabla v\|_{0,\Delta_K} \quad \forall K\in\cT, v\in H^1_D(\O),
\eeq
where $\Delta_K$ is the union of all elements that share at least one vertex with K,


In general, we do not have the following robust Poincar\'e-Friedrichs inequality
\beq \label{robust_PI}
\|\a^{1/2} v\|_0 \leq C \|\a^{1/2}\nabla v\|_0, \quad v\in H^1_D(\O),
\eeq
where $C>0$ is independent of $\a$.

For the following special case, \eqref{robust_PI} does holds.

\begin{lem} \label{Poincar\'e}
Assume that each $\O_i$ in the Assumption \ref{asmp_A} has a part of the Dirichlet boundary condition with a positive measure, i.e., 
$$
\mbox{measure}\{ \Gamma_D^i \} >0, \mbox{ where } \Gamma_D^i = \p\O_i\cap\Gamma_D  \quad \mbox{for } i =1,\cdots n,
$$ 
then the robust Poincar\'e inequality \eqref{robust_PI} is true with $C>0$ being independent of $\a$.
\end{lem}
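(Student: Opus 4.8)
The plan is to exploit the fact that, under Assumption \ref{asmp_A}, the coefficient $\a$ is constant on each subdomain $\O_i$, so the weighted inequality \eqref{robust_PI} decouples into finitely many \emph{unweighted} Poincar\'e-Friedrichs inequalities, one per subdomain, each with a constant that is manifestly independent of $\a$. The decoupling is precisely what breaks the inter-subdomain coupling responsible for the failure of \eqref{robust_PI} in the general case.

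First I would note that for $v \in H^1_D(\O)$, the restriction $v|_{\O_i}$ lies in $H^1(\O_i)$ and vanishes on $\Gamma_D^i = \p\O_i \cap \Gamma_D \subset \Gamma_D$, which has positive $(d-1)$-dimensional measure by hypothesis. Since $\O_i$ is an open polygonal (hence Lipschitz) domain and $\Gamma_D^i$ carries positive surface measure, the standard Poincar\'e-Friedrichs inequality for functions vanishing on a boundary portion of positive measure yields a constant $C_i > 0$, depending only on the geometry of $\O_i$ and $\Gamma_D^i$ but \emph{not} on $\a$, such that
\beq
\|v\|_{0,\O_i} \le C_i \,\|\nabla v\|_{0,\O_i}.
\eeq

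Next I would multiply each local inequality by the constant value $\a_i$ and sum over $i = 1, \dots, n$, so that
\beq
\|\a^{1/2} v\|_0^2 = \sum_{i=1}^n \a_i \|v\|_{0,\O_i}^2
\le \sum_{i=1}^n \a_i C_i^2 \|\nabla v\|_{0,\O_i}^2
\le C^2 \sum_{i=1}^n \a_i \|\nabla v\|_{0,\O_i}^2
= C^2 \|\a^{1/2}\nabla v\|_0^2,
\eeq
where $C := \max_{1\le i \le n} C_i$. Because there are only finitely many subdomains and each $C_i$ is finite and $\a$-independent, the constant $C$ is finite and independent of $\a$, which is exactly the claimed inequality \eqref{robust_PI}.

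The only point requiring care --- and the closest thing to an obstacle --- is the justification of the local Poincar\'e-Friedrichs inequality with an $\a$-free constant: one must confirm that $\Gamma_D^i$ genuinely carries positive surface measure (guaranteed by the hypothesis) so that the inequality holds on $\O_i$ without any supplementary zero-mean or compatibility condition, and that the constant depends only on the fixed geometry of the partition. Everything else is a direct weighting and summation, which is possible precisely because $\a$ is piecewise constant on the $\O_i$.
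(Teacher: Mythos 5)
Your proof is correct and follows essentially the same route as the paper: restrict $v$ to each $\O_i$, apply the standard Poincar\'e--Friedrichs inequality there (valid since $\Gamma_D^i$ has positive measure and the constant depends only on the geometry), weight by the constant value $\a_i$, and sum over the finitely many subdomains. Your write-up simply fills in the details that the paper's brief argument leaves implicit.
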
 
The proof is strghtforward since $v_i=v|_{\O_i} \in \{ w \in H^1(\O_i), w|_{\Gamma_D^i} =0\} $, and $\a$ in $\O_i$ is $\a_i $, a single constant, we have 
$$
\|\a_i^{1/2} v_i\|_{0,\O_i} \leq C \|\a_i^{1/2}\nabla v_i\|_{0,\O_i}, 
$$
with $C>0$ independent of $\a_i$. Summing up all the subdomains we get the robust Poincar\'e-Friedrichs inequality for this special case.


\section{The first augmented mixed formulation: $\theta = 1$}
\setcounter{equation}{0}
In this section, we consider that case that $\theta = 1$.
%
%
%
For simplicity, we consider the finite element approximation in $RT_{0,N}\times S_{1,D}\subset \bX$. The discrete problem is: find $(\bsigma_{1,h},u_{1,h}) \in RT_{0,N}\times S_{1,D}$ such that
\beq\label{disweak1}
B_1((\bsigma_{1,h},u_{1,h}),(\btau_h,v_h))=F_1(\btau_h,v_h), \quad \forall (\btau_h,v_h)\in RT_{0,N}\times S_{1,D},
\eeq
where $B_1$ and $F_1$ are the corresponding forms \eqref{Btheta1} and \eqref{Ftheta} with $\theta=1$.
Based on the discussions in Section \ref{analysis}, we have the well-posedness of discrete problem \eqref{disweak1}. Let $\tri(\cdot,\cdot)\tri_1$ be the norm defined in \eqref{thetanorm} with $\theta=1$. We have the following locally robust and optimal a priori error estimate.
\begin{thm}\label{a_priori_mixed1}
Let $(\bsigma,u)$ be the solution of \eqref{eq_Darcy} and $(\bsigma_{1,h},u_{1,h})$ be the solution of problem \eqref{disweak1}, respectively. We have the following a priori estimate:
\begin{eqnarray}\label{apriori1}
\tri(\bsigma-\bsigma_{1,h},u-u_{1,h})\tri_1 &\leq& 2\inf_{(\btau_h,v_h)\in RT_{0,N}\times S_{1,D}}\tri(\bsigma-\btau_h,u-v_h)\tri_1.
\end{eqnarray}
Under Assumption \ref{asmp_A} on the coefficients, if we further assume that $u|_K\in H^{1+s_K}(K)$, $(\nabla u-\bff)|_K\in H^{q_K}(K)^d$, $g|_K\in H^{t_K}(K)$ for $K\in\cT$, where the local regularity indexes $s_K$, $q_K$, and $t_K$ satisfy the following assumptions: $0<s_K\leq 1$ in two dimensions and $1/2<s_K\leq 1$ in there dimensions,  $1/2<q_K\leq 1$ with the constant $C_{rt}>0$ being unbounded as $q_K\downarrow 1/2$, and $0<t_K\leq 1$, then the following local robust and local optimal a priori error estimate holds: there exists a constant $C$ independent of $\a$ and the mesh-size, such that
\beq \label{a_priori_mixed_1}
\tri(\bsigma-\bsigma_{1,h},u-u_{1,h})\tri_1 \leq C\sum_{K\in\cT}\a_K^{1/2}\left(h_K^{s_K}|\nabla u|_{s_K,K}+ C_{rt} h_K^{q_K}|\nabla u-\bff|_{q_K,K} + \a_K^{-1}h_K^{t_K}|g|_{t_K,K}
\right).
\eeq
\end{thm}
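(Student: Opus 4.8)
The plan is to treat the two assertions separately. The best-approximation bound \eqref{apriori1} is nothing but the specialization of Theorem \ref{a_priori} to the case $\theta=1$ with the concrete pair $RT_{0,N}\times S_{1,D}\subset\bX$, so it follows immediately from the coercivity \eqref{ine_coe1}, the continuity \eqref{ine_conA1}, and the Galerkin orthogonality \eqref{eq_ortho}; no further argument is needed. The substance of the theorem is the second estimate \eqref{a_priori_mixed_1}, and for this I would bound the infimum on the right-hand side of \eqref{apriori1} by inserting a single convenient pair of interpolants rather than taking the true infimum.

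The natural choice is $\btau_h = I^{rt}_h\bsigma \in RT_{0,N}$ and $v_h = I^{nodal}_h u \in S_{1,D}$. With this choice I would expand the squared quantity $\tri(\bsigma-I^{rt}_h\bsigma,\,u-I^{nodal}_h u)\tri_1^2$ into its three defining contributions from \eqref{thetanorm} with $\theta=1$ and $A=\a I$, namely the element-wise sums of $\a_K\|\nabla(u-I^{nodal}_h u)\|_{0,K}^2$, of $\a_K^{-1}\|\bsigma-I^{rt}_h\bsigma\|_{0,K}^2$, and of $\a_K^{-1}\|\gradt(\bsigma-I^{rt}_h\bsigma)\|_{0,K}^2$. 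To each of these I would apply the corresponding local interpolation estimate already recorded in Section 2: \eqref{nodal_inter} for the gradient term, \eqref{RT_inter1} for the $L^2$ flux term, and \eqref{RT_inter2} together with $\gradt\bsigma=g$ for the divergence term.

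The crux of the argument, and the only place where robustness is won or lost, is the bookkeeping of the powers of $\a_K$. Here I would use Assumption \ref{asmp_A}, under which $\a=\a_K$ is a single constant on each $K$ and therefore factors cleanly out of every element norm, together with the identity $\bsigma=\a(\bff-\nabla u)$ coming from the equilibrium equation of \eqref{eq_Darcy}. The latter gives $|\bsigma|_{q_K,K}=\a_K|\nabla u-\bff|_{q_K,K}$ on each element, so that in the flux term the factor $\a_K^{-1/2}$ from $A^{-1/2}$ meets the $\a_K$ generated by \eqref{RT_inter1}, leaving exactly $\a_K^{1/2}$; similarly $\a_K^{-1/2}=\a_K^{1/2}\a_K^{-1}$ reshapes the divergence term into the stated form. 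Summing the three element-wise bounds, taking square roots, and using $\sqrt{\sum x_i^2}\le\sum x_i$ for nonnegative $x_i$ then yields \eqref{a_priori_mixed_1} with a constant $C$ depending only on shape-regularity (and on $C_{rt}$ through its explicit appearance), independent of $\a$ and of the mesh size.

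I expect the main obstacle to be precisely this $\a_K$-tracking step: one must resist estimating $\|\bsigma-I^{rt}_h\bsigma\|_{0,K}$ in terms of the regularity of $\bsigma$ itself, whose seminorm scales with $\a_K$ and would spoil robustness, and instead transfer it, via $\bsigma=\a(\bff-\nabla u)$ and the piecewise-constancy of $\a$, to the coefficient-free quantity $\nabla u-\bff$. Once the interpolant is applied to $\bsigma$ but the regularity is accounted for through $\nabla u-\bff$, the cancellation of $\a_K$ is automatic and the remaining steps are routine.
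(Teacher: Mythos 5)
Your proposal is correct and follows essentially the same route as the paper: the paper's own proof simply cites \eqref{apriori} for the best-approximation bound and then invokes the interpolation estimates \eqref{nodal_inter}, \eqref{RT_inter1}, and \eqref{RT_inter2} with the pair $(I^{rt}_h\bsigma, I^{nodal}_h u)$, exactly as you do. Your explicit tracking of the $\a_K$ powers via $\bsigma=\a(\bff-\nabla u)$ and $\gradt\bsigma=g$ is the intended (if unstated) bookkeeping behind the paper's one-line argument.
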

\begin{proof}
The result \eqref{apriori1} is from \eqref{apriori}.
The result of \eqref{a_priori_mixed_1} can be derived from regularity assumptions and interpolation results \eqref{nodal_inter}, \eqref{RT_inter1}, and \eqref{RT_inter2}.
\end{proof}
\begin{rem} In theorem \ref{a_priori_mixed1}, when $\bff=0$, then $s_K = q_K$ for each $K \in \cT$. When $\bff\neq \bzero$, the local regularity of $\bsigma$ and $\nabla u$ can be different, and the regularity of $g$ (which is $\gradt\bsigma$) is also independent of that of $u$.
\end{rem}

\subsection{A least-squares a posteriori error estimator for the first augmented mixed formulation}

We discuss a posteriori error estimator for the first augmented mixed formulation in this subsection.

Define an indicator on each $K\in\cT$:
\begin{eqnarray*}
\eta_{1,K}(\bsigma_{1,h},u_{1,h}) &=& \Big\{\|\a^{-1/2}(g-\gradt\bsigma_{1,h})\|^2_{0,K}
+
\|\a^{1/2}(\bff-\nabla u_{1,h}-\a^{-1}\bsigma_{1,h})\|^2_{0,K}\Big\}^{1/2}.
\end{eqnarray*}
Define the corresponding global a posteriori error estimator:
\beq\label{ap_aug1}
\eta_1(\bsigma_{1,h},u_{1,h})
= \Big\{\|\a^{-1/2}(g-\gradt\bsigma_{1,h})\|^2_{0}
+
\|\a^{1/2}(\bff-\nabla u_{1,h}-\a^{-1}\bsigma_{1,h})\|^2_{0}\Big\}^{1/2}.
\eeq
Note that the error estimator $\eta_1(\bsigma_{1,h},u_{1,h})$ is actually a least-squares error estimator, see discussion below in section \ref{comparison_1}.
\begin{thm}\label{posteriori1}
Let $(\bsigma,u)$ be the solution of \eqref{eq_Darcy} and $(\bsigma_{1,h},u_{1,h})$ be the solution of problem \eqref{disweak1}, repectively. Assume that Assumption \ref{asmp_A} on the coefficients and Assumption \ref{asmp_QMA} (QMA) are true, then there exists positive constants $C$ independent of $\a$ and the mesh-size such that the following reliability holds:
\beq \label{rel_eta1}
\tri(\bsigma-\bsigma_{1,h}, u-u_{1,h})\tri_1 \leq C\eta_1(\bsigma_{1,h},u_{1,h}).
\eeq
\end{thm}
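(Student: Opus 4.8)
The plan is to derive \eqref{rel_eta1} from the error representation of Lemma \ref{errorrep}. Under Assumption \ref{asmp_A} we have $A=\a I$, so with $\theta=1$ that identity becomes
\[
\tri(\bE,e)\tri^2_1 = (\bff-\a^{-1}\bsigma_{1,h}-\nabla u_{1,h},\,\bE+\a\nabla(e-v_h)) + (\a^{-1}(g-\gradt\bsigma_{1,h}),\gradt\bE) + 2(g-\gradt\bsigma_{1,h},e-v_h),
\]
valid for every $v_h\in S_{1,D}$. The two residuals appearing here are precisely those in the estimator, so that $\|\a^{1/2}(\bff-\a^{-1}\bsigma_{1,h}-\nabla u_{1,h})\|_0\le\eta_1$ and $\|\a^{-1/2}(g-\gradt\bsigma_{1,h})\|_0\le\eta_1$. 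Since $e=u-u_{1,h}\in H^1_D(\O)$, I would choose $v_h=I_{rcl}e\in S_{1,D}$, the robust Cl\'ement interpolant of the error, and then split the right-hand side into four inner products, bounding each by a constant times $\eta_1\,\tri(\bE,e)\tri_1$.

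The two terms $(\bff-\a^{-1}\bsigma_{1,h}-\nabla u_{1,h},\bE)$ and $(\a^{-1}(g-\gradt\bsigma_{1,h}),\gradt\bE)$ need no interpolation: distributing the weights as $(\a^{1/2}(\cdots),\a^{-1/2}\bE)$ and $(\a^{-1/2}(g-\gradt\bsigma_{1,h}),\a^{-1/2}\gradt\bE)$ and applying Cauchy--Schwarz gives the bound at once, because both $\|\a^{-1/2}\bE\|_0$ and $\|\a^{-1/2}\gradt\bE\|_0$ are dominated by $\tri(\bE,e)\tri_1$. Matching the powers $\a^{\pm1/2}$ so that the coefficient cancels is exactly what keeps these two bounds robust.

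The remaining two terms, $(\bff-\a^{-1}\bsigma_{1,h}-\nabla u_{1,h},\a\nabla(e-I_{rcl}e))$ and $2(g-\gradt\bsigma_{1,h},e-I_{rcl}e)$, are where the robust Cl\'ement estimate \eqref{rcl} (valid under the QMA, Assumption \ref{asmp_QMA}) is essential. For the first I would write it as $(\a^{1/2}(\cdots),\a^{1/2}\nabla(e-I_{rcl}e))$ and use the gradient part of \eqref{rcl}, $\|\a_K^{1/2}\nabla(e-I_{rcl}e)\|_{0,K}\le C\|\a^{1/2}\nabla e\|_{0,\Delta_K}$; summing over $K$ and using the finite overlap of the patches $\Delta_K$ gives $\|\a^{1/2}\nabla(e-I_{rcl}e)\|_0\le C\tri(\bE,e)\tri_1$, so this term is $\le C\eta_1\tri(\bE,e)\tri_1$. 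For the second I would apply the $L^2$ part of \eqref{rcl}, $\|\a_K^{1/2}(e-I_{rcl}e)\|_{0,K}\le Ch_K\|\a^{1/2}\nabla e\|_{0,\Delta_K}$, pair elementwise against $\|\a^{-1/2}(g-\gradt\bsigma_{1,h})\|_{0,K}$ and use Cauchy--Schwarz over the element sum; bounding each $h_K$ by the domain diameter and again invoking the patch overlap yields the same bound $\le C\eta_1\tri(\bE,e)\tri_1$.

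Collecting the four estimates gives $\tri(\bE,e)\tri_1^2\le C\eta_1\tri(\bE,e)\tri_1$; dividing by $\tri(\bE,e)\tri_1$ (the estimate being trivial when it vanishes) proves \eqref{rel_eta1}. The one place where robustness could be lost, and hence the crux of the argument, is the control of the interpolation error of $e$ in the $\a$-weighted $H^1$ and $L^2$ norms in the last two terms; this is exactly what the QMA-based operator $I_{rcl}$ in \eqref{rcl} delivers, with constants independent of the jumps of $\a$. Everything else reduces to weighted Cauchy--Schwarz with the weights arranged so that $\a$ cancels.
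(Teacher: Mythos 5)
Your proposal is correct and follows essentially the same route as the paper's proof: the error representation of Lemma \ref{errorrep} with $\theta=1$ and $v_h=I_{rcl}e$, weighted Cauchy--Schwarz with the $\a^{\pm1/2}$ weights arranged to cancel, and the QMA-based robust Cl\'ement estimate \eqref{rcl} (together with boundedness of the mesh size) to absorb the interpolation-error terms into $\tri(\bE,e)\tri_1$.
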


\begin{proof}
Let $\bE_1 = \bsigma-\bsigma_{1,h}$ and $e_1= u-u_{1,h}$.
In \eqref{error_rep}, let $\theta =1$ and $v_h = I_{rcl}e_1$, we have
$$
\tri(\bE_1,e_1)\tri^2_1 = (\bff-\a^{-1}\bsigma_{1,h}-\nabla u_{1,h},\bE_1+\a\nabla (e_1- I_{rcl} e_1))+(g-\gradt\bsigma_{1,h},\a^{-1}\gradt\bE_1+2(e_1- I_{rcl} e_1)).
$$
Applying Cauchy-Schwarz and triangle inequalities, we get
\begin{eqnarray} \label{EE}
\tri(\bE_1,e_1)\tri^2_1 &\leq& \|\a^{1/2}(\bff-\a^{-1}\bsigma_{1,h}-\nabla u_{1,h})\|_0(\|\a^{-1/2}\bE_1\|_{0}+\|\a^{1/2}\nabla (e_1- I_{rcl} e_1)\|_0)\\ \nonumber
&&+\|\a^{-1/2}(g-\gradt\bsigma_{1,h})\|_{0} (\|\a^{-1/2}\gradt\bE_1\|_0 +2\|\a^{1/2}(e_1- I_{rcl} e_1)\|_0.
\end{eqnarray}
By the robust Cl\'{e}ment interpolation result \eqref{rcl} under Assumption \ref{asmp_QMA} and the fact that the mesh-size is bounded, we have the following robust results,
$$
\|\a^{1/2}\nabla (e_1- I_{rcl} e_1)\|_0 \leq C \|\a^{1/2}\nabla e_1\|_0 \quad\mbox{and}\quad
\|\a^{1/2}(e_1- I_{rcl} e_1)\|_0 \leq C \|\a^{1/2}h\nabla e_1\|_0 \leq C \|\a^{1/2}\nabla e_1\|_0.
$$ 
Substitute these two robust results into \eqref{EE}, we get
\begin{eqnarray*} 
\tri(\bE_1,e_1)\tri^2_1 &\leq& C (\|\a^{1/2}(\bff-\a^{-1}\bsigma_{1,h}-\nabla u_{1,h})\|_0+\|\a^{-1/2}(g-\gradt\bsigma_{1,h})\|_{0}) \\
&&(\|\a^{-1/2}\bE_1\|_{0}+\|\a^{-1/2}\gradt\bE_1\|_{0}+\|\a^{1/2}\nabla e_1\|_0)\\ 
&\leq&C\eta_1(\bsigma_{1,h}, u_{1,h})\tri(\bE_1,e_1)\tri_1 .
\end{eqnarray*}
The robust reliability result \eqref{rel_eta1} is proved. 
\end{proof}
The efficiency of the proposed error indicator is the same as the standard least-squares a posteriori error estimator.
\begin{thm}\label{efficiency}
Let $(\bsigma,u)$ be the solution of \eqref{eq_Darcy} and $(\bsigma_{1,h},u_{1,h})$ be the solution of problem \eqref{disweak1}, respectively. Assume that Assumption \ref{asmp_A} on the coefficients is true, we have the following efficiency:
\beq\label{eff_eta1u1}
\eta_{1,K}(\bsigma_{1,h},u_{1,h}) \leq \sqrt{2} \tri(\bsigma-\bsigma_{1,h},u-u_{1,h})\tri_{1,K}
\quad \forall K\in\cT.
\eeq
\end{thm}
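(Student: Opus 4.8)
The plan is to exploit the exact consistency of the augmented formulation: each of the two terms defining $\eta_{1,K}$ is a residual of one of the equations in \eqref{eq_Darcy}, and since the exact solution $(\bsigma,u)$ annihilates these residuals, each residual is identically a difference between exact and discrete quantities. Writing $\bE_1=\bsigma-\bsigma_{1,h}$ and $e_1=u-u_{1,h}$ as in the reliability proof, and using Assumption \ref{asmp_A} so that $A=\a I$, the exact solution obeys $g=\gradt\bsigma$ and $\bff=\nabla u+\a^{-1}\bsigma$ pointwise. This is the only structural fact needed, and it is what makes the whole estimate element-local.

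First I would rewrite the two residuals appearing in $\eta_{1,K}$ purely in terms of the errors. The constitutive (divergence) residual becomes
\[
g-\gradt\bsigma_{1,h}=\gradt\bsigma-\gradt\bsigma_{1,h}=\gradt\bE_1,
\]
and the equilibrium residual becomes
\[
\bff-\nabla u_{1,h}-\a^{-1}\bsigma_{1,h}=\nabla e_1+\a^{-1}\bE_1.
\]
Substituting these into the definition of $\eta_{1,K}$ then gives
\[
\eta_{1,K}(\bsigma_{1,h},u_{1,h})^2=\|\a^{-1/2}\gradt\bE_1\|_{0,K}^2+\|\a^{1/2}\nabla e_1+\a^{-1/2}\bE_1\|_{0,K}^2.
\]

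Next I would control the mixed term by the pointwise elementary inequality $(a+b)^2\le 2a^2+2b^2$ applied to $\a^{1/2}\nabla e_1+\a^{-1/2}\bE_1$, which yields
\[
\|\a^{1/2}\nabla e_1+\a^{-1/2}\bE_1\|_{0,K}^2\le 2\|\a^{1/2}\nabla e_1\|_{0,K}^2+2\|\a^{-1/2}\bE_1\|_{0,K}^2.
\]
Combining with the previous display and bounding $\|\a^{-1/2}\gradt\bE_1\|_{0,K}^2\le 2\|\a^{-1/2}\gradt\bE_1\|_{0,K}^2$ gives
\[
\eta_{1,K}(\bsigma_{1,h},u_{1,h})^2\le 2\Big(\|\a^{1/2}\nabla e_1\|_{0,K}^2+\|\a^{-1/2}\bE_1\|_{0,K}^2+\|\a^{-1/2}\gradt\bE_1\|_{0,K}^2\Big)=2\,\tri(\bE_1,e_1)\tri_{1,K}^2,
\]
and taking square roots delivers \eqref{eff_eta1u1}.

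There is essentially no hard analytic obstacle here, and this is the point worth stressing: the argument is entirely element-local and invokes no interpolation, no quasi-monotonicity, and no Poincar\'e–Friedrichs inequality, which is precisely why only Assumption \ref{asmp_A} (and not Assumption \ref{asmp_QMA}) is required and why the efficiency constant $\sqrt2$ is independent of $\a$ and of the mesh. The only care needed is in verifying the consistency identities $g=\gradt\bsigma$ and $\bff=\nabla u+\a^{-1}\bsigma$ for the exact solution and in tracking the factor $2$ from Young's inequality; since the weights $\a^{1/2}$ and $\a^{-1/2}$ are carried identically on both sides throughout, no constant depending on the contrast of $\a$ ever enters.
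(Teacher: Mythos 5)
Your proposal is correct and follows essentially the same route as the paper's proof of Theorem \ref{efficiency}: substitute the exact relations $g=\gradt\bsigma$ and $\bff=\nabla u+\a^{-1}\bsigma$ to express both residuals as errors, then apply the elementary inequality $(a+b)^2\le 2a^2+2b^2$ to the constitutive term, which yields the constant $\sqrt{2}$ with no dependence on $\a$ or the mesh. Your observation that the argument is purely element-local and needs neither Assumption \ref{asmp_QMA} nor any interpolation is exactly the point the paper makes in the remark following the theorem.
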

\begin{proof}
By the triangle inequality and the first-order system \eqref{eq_Darcy}, for any $K\in\cT$, we have
\begin{eqnarray*}
\eta_{1,K}^2(\bsigma_{1,h},u_{1,h}) &=& \|\a^{-1/2}(g-\gradt\bsigma_{1,h})\|^2_{0,K} 
	+ \|\a^{1/2}\bff-\a^{-1/2}\bsigma_{1,h}-\a^{1/2}\nabla u_{1,h}\|^2_{0,K}\\
&=& \|\a^{-1/2}(\gradt\bsigma-\gradt\bsigma_{1,h})\|^2_{0,K} + \|\a^{-1/2}(\bsigma-\bsigma_{1,h})+\a^{1/2}\nabla (u- u_{1,h})\|^2_{0,K} \\
&\leq& 
\|\a^{-1/2}\gradt(\bsigma-\bsigma_{1,h})\|^2_{0,K} + 2( \|\a^{-1/2}(\bsigma-\bsigma_{1,h})\|^2_{0,K}+ \|\a^{1/2}\nabla (u- u_{1,h})\|^2_{0,K}  \big)\\
&=& 2 \tri(\bsigma-\bsigma_{1,h},u-u_{1,h})\tri^2_{1,K}.
\end{eqnarray*}
The theorem is proved.
\end{proof}
\begin{rem} Note that  Assumption \ref{asmp_QMA} (QMA) is not required for the robustness of the efficiency bound.
\end{rem}

\subsection{Comparison with the $L^2$-based LSFEM} \label{comparison_1}
For the first-order system \eqref{eq_Darcy}, the $L^2$-based least-squares functional is
\beq \label{lsfunctional}
J(\btau,v;\bff,g) := \|\a^{1/2}\nabla v +\a^{-1/2}\btau - \a^{1/2}\bff\|_0^2+  \|\a^{-1/2}(\gradt\btau - g)\|_0^2, \quad (\btau,v)\in \bX.
\eeq
Then the $L^2$-based least-squares minimization problem is: find $(\bsigma,u) \in \bX$, such that 
\beq
J(\bsigma,u;\bff,g) = \inf_{(\btau,v)\in \bX} J(\btau,v;\bff,g).
\eeq 
Equivalently, it can be written in a weak form as: find $(\bsigma,u) \in \bX$, such that 
\beq \label{lsbform}
b((\bsigma,u), (\btau,v)) = (\bff,\btau+\a\nabla v)+ (\a^{-1}g,\gradt\btau), \quad \forall (\btau,v) \in \bX,
\eeq
where the least-squares bilinear form $b$ is defined as follows:
\beq
b((\bchi,w), (\btau,v)) = (\a^{-1}\bchi+\nabla w,\btau+\a\nabla v)+ (\a^{-1}\gradt\bchi,\gradt\btau), \quad \forall (\bchi,w), (\btau,v) \in \bX.
\eeq
The lowest-order least-squares finite element method (LSFEM) is: find $(\bsigma_{h}^{ls},u_h^{ls}) \in RT_{0,N}\times S_{1,D}$, such that
\beq \label{lsfem1}
J(\bsigma^{ls}_h,u^{ls}_h;\bff,g) = \inf_{(\btau,v)\in RT_{0,N}\times S_{1,D}} J(\btau,v;\bff,g).
\eeq 
Or, equivalently, find $(\bsigma_{h}^{ls},u_h^{ls}) \in RT_{0,N}\times S_{1,D}$, such that,
\beq  \label{lsfem1h}
b((\bsigma_{h}^{ls},u_h^{ls}), (\btau,v)) = (\bff,\btau+\a\nabla v)+ (\a^{-1}g,\gradt\btau), \quad \forall (\btau,v) \in RT_{0,N}\times S_{1,D}.
\eeq
The key ingredient to establish  a priori and a posteriori error estimates of the LSFEM \eqref{lsfem1} or \eqref{lsfem1h} is the following norm equivalence:
\beq \label{ls_equvalience}
C_{coe} \tri(\btau,v)\tri_1 ^2 \leq J(\btau,v;0,0) \leq C_{con} \tri(\btau,v)\tri_1 ^2, \quad \forall (\btau,v)\in \bX,
\eeq
for some positive constants $C_{coe}$ and $C_{con}$. The first inequality of \eqref{ls_equvalience} is the coercivity of the least-squares bilinear form \eqref{lsbform}:
\beq \label{ls_coe}
b((\btau,v), (\btau,v)) \geq C_{coe} \tri(\btau,v)\tri_1^2\quad \forall (\btau,v)\in \bX.
\eeq
The second inequality  of \eqref{ls_equvalience} is equivalent to the continuity of the least-squares bilinear form \eqref{lsbform} since it is symmetric:
\beq
b((\bchi,w), (\btau,v)) \leq C_{con} \tri(\bchi,w)\tri_1\tri(\btau,v)\tri_1,\quad \forall (\bchi,w), (\btau,v) \in \bX.
\eeq
With the coercivity and continuity, we can easily get the a priori error estimate of the LSFEM \eqref{lsfem1h}:
\beq \label{ls_apriori}
\tri(\bsigma-\bsigma_{h}^{ls},u-u_{h}^{ls})\tri_1 \leq \dfrac{C_{con}}{C_{coe}}\inf_{(\btau_h,v_h)\in RT_{0,N}\times S_{1,D}}\tri(\bsigma-\btau_h,u-v_h)\tri_1.
\eeq 
By the triangle inequality, it is easy to prove that continuity constant $C_{con}$ is a constant independent of $\a$ (actually, $C_{con}=2$ for this case, see arguments of the proof in Theorem \ref{efficiency}). On the other hand, the coercivity constant $C_{coe}$ usually depends $\a$. The reason is that the proof of the coercivity of the least-squares bilinear form \eqref{lsbform} requires the Poincar\'e inequality, which is usually not robust with respect to $\a$. For the special case discussed in Lemma \ref{Poincar\'e}, we have the following result:
\begin{thm} \label{lsfem_robust}
Assume that each $\O_i$ in the Assumption \ref{asmp_A} has a part of the Dirichlet boundary condition with a positive measure, then the coercivity constant $C_{coe}$ the norm equivalence \eqref{ls_equvalience}  and the coercivity \eqref{ls_coe} is independent of $\a$.
\end{thm}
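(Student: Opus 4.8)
The plan is to establish the coercivity \eqref{ls_coe} directly, since $b((\btau,v),(\btau,v)) = J(\btau,v;0,0)$ and the first (lower) inequality in the norm equivalence \eqref{ls_equvalience} is precisely \eqref{ls_coe}; thus it suffices to exhibit a lower bound $J(\btau,v;0,0) \ge C_{coe}\tri(\btau,v)\tri_1^2$ with $C_{coe}$ independent of $\a$. First I would expand the squared first-order residual in $J(\btau,v;0,0)$ (recall $A = \a I$ under Assumption \ref{asmp_A}), which produces the single cross term $2(\nabla v,\btau)$. Applying the integration-by-parts identity \eqref{eq_interbyparts}, $(\nabla v,\btau) = -(\gradt\btau,v)$, gives the key identity $\tri(\btau,v)\tri_1^2 = J(\btau,v;0,0) + 2(v,\gradt\btau)$. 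The whole problem thereby reduces to controlling the one cross term $2(v,\gradt\btau)$ by $J(\btau,v;0,0)$ and $\tri(\btau,v)\tri_1$ with an $\a$-independent constant.

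The hard part, and the precise reason the special hypothesis is invoked, is this cross term: a plain Cauchy--Schwarz bound $2(v,\gradt\btau) \le 2\|\a^{1/2}v\|_0\|\a^{-1/2}\gradt\btau\|_0$ introduces $\|\a^{1/2}v\|_0$, which in general is not controllable by $\|\a^{1/2}\nabla v\|_0$ with a constant free of $\a$. Under the present assumption that each $\O_i$ meets $\Gamma_D$ in a set of positive measure, Lemma \ref{Poincar\'e} delivers exactly the robust Poincar\'e--Friedrichs inequality \eqref{robust_PI}, $\|\a^{1/2}v\|_0 \le C_P\|\a^{1/2}\nabla v\|_0$, with $C_P$ independent of $\a$. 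This is the ingredient that is unavailable in the general case and is what forces $C_{coe}$ to depend on $\a$ there.

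To close, I would combine these estimates while splitting the two factors asymmetrically: bound $\|\a^{1/2}\nabla v\|_0 \le \tri(\btau,v)\tri_1$ but route the divergence factor through $J$ itself, using $\|\a^{-1/2}\gradt\btau\|_0 \le J(\btau,v;0,0)^{1/2}$ since that term is one of the two summands of $J$. This yields $\tri(\btau,v)\tri_1^2 \le J(\btau,v;0,0) + 2C_P\,\tri(\btau,v)\tri_1\,J(\btau,v;0,0)^{1/2}$, which, writing $t := \tri(\btau,v)\tri_1\,J(\btau,v;0,0)^{-1/2}$ (the degenerate case $J(\btau,v;0,0)=0$ forcing $\tri(\btau,v)\tri_1=0$ being immediate), is the quadratic inequality $t^2 - 2C_P t - 1 \le 0$. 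Solving gives $t \le C_P + \sqrt{C_P^2+1}$, hence $J(\btau,v;0,0) \ge (C_P + \sqrt{C_P^2+1})^{-2}\tri(\btau,v)\tri_1^2$, so one takes $C_{coe} = (C_P + \sqrt{C_P^2+1})^{-2}$, depending only on $C_P$ and therefore independent of $\a$. I expect the only genuinely delicate point to be this asymmetric splitting: the symmetric alternative $\|\a^{-1/2}\gradt\btau\|_0 \le \tri(\btau,v)\tri_1$ would instead demand $C_P < 1/2$, which cannot be guaranteed, whereas passing the divergence factor through $J^{1/2}$ makes the quadratic inequality solvable for every $C_P$.
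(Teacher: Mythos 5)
Your proof is correct and rests on the same two ingredients as the paper's: the integration-by-parts identity $(\nabla v,\btau)=-(v,\gradt\btau)$ to move the cross term onto $\gradt\btau$, and the robust Poincar\'e inequality of Lemma \ref{Poincar\'e} to control $\|\a^{1/2}v\|_0$ with an $\a$-independent constant. The only difference is packaging: the paper derives component-wise bounds on $\|\a^{1/2}\nabla v\|_0$ and $\|\a^{-1/2}\btau\|_0$ in terms of the two residual norms by inserting $\btau$ into $(\a\nabla v,\nabla v)$ and cancelling a factor, whereas you start from the exact identity $\tri(\btau,v)\tri_1^2=J(\btau,v;0,0)+2(v,\gradt\btau)$ and close via a quadratic inequality; both routes yield an explicit $C_{coe}$ depending only on the Poincar\'e constant.
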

\begin{proof}
For a $v\in H^1_D(\O)$, let $\btau$ be an arbitrary vector in $H_N(\divvr;\O)$, then
\begin{eqnarray*}
\|\a^{1/2}\nabla v\|_0^2 &=& (\a\nabla v,\nabla v) = (\a\nabla v+\btau,\nabla v) - (\btau,\nabla v) = (\a\nabla v+\btau,\nabla v) + (\gradt\btau, v) \\
& \leq &\|\a^{1/2}\nabla v+\a^{-1/2}\btau\|_0 \|\a^{1/2}\nabla v\|_0 + \|\a^{-1/2}\gradt\btau\|_0 \|\a^{1/2}v\|_0.
\end{eqnarray*}
By lemma \ref{Poincar\'e}, for the special setting of the theorem, a robust Poincar\'e inequality $\|\a^{1/2}v\|_0\leq C\|\a^{1/2}\nabla v\|_0$ for $v\in H^1_D(\O)$ is true. Thus, we have 
\beq
\|\a^{1/2}\nabla v\|_0 \leq \|\a^{1/2}\nabla v+\a^{-1/2}\btau\|_0 +C\|\a^{-1/2}\gradt\btau\|_0, \quad \forall (\btau,v)\in \bX, 
\eeq
with the constant $C$ independent of $\a$. It is also simple to see that, for $(\btau,v)\in \bX$,
\beq
\|\a^{-1/2}\btau\|_0  \leq \|\a^{1/2}\nabla v+\a^{-1/2}\btau\|_0 +\|\a^{1/2}\nabla v\|_0 \leq 2\|\a^{1/2}\nabla v+\a^{-1/2}\btau\|_0 +C\|\a^{-1/2}\gradt\btau\|_0.
\eeq
Thus, we prove that robust coercivity.
\end{proof}
From the above proof, we also confirm that the weight $\a^{-1/2}$ in $\|\a^{-1/2}\gradt \btau\|_0$ in \eqref{lsfunctional} is the right choice. 

From \eqref{ls_apriori}, except in the special cases where the robust version of Poincar\'e inequality holds, the a priori error estimate of the LSFEM \eqref{lsfem1}, \eqref{lsfem1h} in general is not robust with respect to $\a$. 

Let $(\bsigma_{a},u_{a}) \in \bX$, we can define the following least-squares based a posteriori error estimator:
\beq\label{ap_lsfem}
\eta_{ls}(\bsigma_{a},u_{a})
= \Big (\|\a^{-1/2}(g-\gradt\bsigma_{a})\|^2_{0} + \|\a^{1/2}(\bff-\nabla u_{a}-\a^{-1}\bsigma_{a})\|^2_{0}\Big )^{1/2} = J(\bsigma_{a},u_{a};\bff,g)^{1/2}.
\eeq
Let $\bE_a = \bsigma - \bsigma_{a}$ and $e_a = u-u_a$. Using the facts that $\bff=\nabla u +\a^{-1}\bsigma$ and $g=\gradt\bsigma$ from \eqref{eq_Darcy}, we have the following identity:
\begin{eqnarray*}
J(\bsigma_a,u_a;\bff,g) &=& \|\a^{-1/2}(g-\gradt\bsigma_{a})\|^2_{0} + \|\a^{1/2}(\bff-\nabla u_{a}-\a^{-1}\bsigma_{a})\|^2_{0}\\  
&=& \|\a^{-1/2}(\gradt\bsigma-\gradt\bsigma_{a})\|^2_{0} + \|\a^{1/2}(\nabla u +\a^{-1}\bsigma-\nabla u_{a}-\a^{-1}\bsigma_{a})\|^2_{0} = J(\bE_a,e_a;0,0). 
\end{eqnarray*}
By  \eqref{ls_equvalience},  the following reality and efficiency bounds are true,
\beq \label{ls_rel_eff}
C_{coe} \tri(\bE_a,e_a)\tri_1 ^2 \leq J(\bE_a,e_a;0,0) = J(\bsigma_a,u_a;\bff,g) \leq C_{con} \tri(\bE_a,e_a)\tri_1 ^2.
\eeq
An important fact of \eqref{ls_rel_eff} is that $(\bsigma_{a},u_{a}) \in \bX$ does not need to be the numerical solution of the LSFEM problem \eqref{lsfem1}. In fact, the pair can be any functions in $\bX$. Let  $(\bsigma_{h}^{ls},u_h^{ls}) \in RT_{0,N}\times S_{1,D}\subset \bX$ be the numerical solution of the LSFEM problem \eqref{lsfem1h}, we immediately have the reliability and efficiency of the least-squares error estimator for the LSFEM approximation \eqref{lsfem1h}, 
\beq \label{lsfem_error_analysis}
C_{coe} \tri(\bsigma-\bsigma_{h}^{ls},u-u_h^{ls})\tri_1 ^2 \leq  \eta_{ls}(\bsigma_{h}^{ls},u_{h}^{ls})^2= J(\bsigma_h^{ls},u_h^{ls};\bff,g) \leq C_{con} \tri(\bsigma-\bsigma_{h}^{ls},u-u_h^{ls})\tri_1 ^2.
\eeq
Of course, since $C_{coe}$ depends on $\a$, the a posteriori error estimator $\eta_{ls}(\bsigma_{h}^{ls},u_{h}^{ls})$ is not robust for the LSFEM approximation \eqref{lsfem1h}.

If the robustness of the estimator is not our goal, we can have the non-robust reliability and robust efficiency of the a posteriori error estimator $\eta_1(\bsigma_{1,h},u_{1,h})$ of the first augmented mixed method \eqref{ap_aug1} by using the fact that  $(\bsigma_{1,h},u_{1,h}) \in RT_{0,N}\times S_{1,D}\subset \bX$ and \eqref{ls_rel_eff}:
\beq \label{ls_am_rel_eff}
C_{coe} \tri(\bE_1,e_1)\tri_1 ^2 \leq \eta_1(\bsigma_{1,h},u_{1,h})^2 = \eta_{ls}(\bsigma_{1,h},u_{1,h})^2 \leq C_{con} \tri(\bE_1,e_1)\tri_1 ^2.
\eeq
This result is weaker than that of Theorem \ref{posteriori1}. 

\begin{rem}
It is interesting to see that for the first augmented mixed method \eqref{disweak1} and the LSFEM \eqref{lsfem1} or \eqref{lsfem1h}, their a posteriori error estimators are the same. However, one is robust, and the other one is not. The subtle difference is that the numerical solution in the a posteriori error estimator \eqref{ap_aug1} is obtained by the robust first augmented mixed method \eqref{disweak1}, and the Galerkin orthogonality \eqref{GO} is used in the error representation Lemma \ref{errorrep}. On the other hand, the reliability and efficiency of a general least-squares a posteriori error estimator do not require that the approximations are the numerical solutions of the corresponding LSFEM or augmented mixed method.  
\end{rem}

\begin{rem}
We take a comparison of the least-squares problem \eqref{lsbform} and the augmented problem \eqref{eq_varfrom} with $\theta=1$ with the bilinear form in the form of \eqref{Btheta2}. The first augmented mixed method can be viewed as adding a consistent term, 
$$
-2(\bsigma,\nabla v) = 2(\gradt \bsigma,v) = (g,v) \quad \forall v\in H^1_D(\O)
$$
to the least-squares problem \eqref{lsbform}. The extra term makes the new formulation lose the least-squares energy minimization principle, but $-2(\btau,\nabla v)$ cancels the cross-term in $\|A^{-1/2}\btau+A^{1/2}\nabla v\|_0^2$, thus the new augmented mixed method is robust in the energy norms.
\end{rem}

\begin{rem} \label{lsfem_mild}
We also need to mention that the non-robustness of the $L^2$-LSFEM is mild.  Take a close look at the proof of the robustness in Theorem \ref{lsfem_robust}, a robust Poincar\'e inequality for any $v\in H^1_D(\O)$ is needed. In general, the robust Poincar\'e inequality is not true. But, for the robust error analysis of LSFEM \eqref{lsfem_error_analysis}, we only need the fact 
\beq \label{error_Poincar\'e}
\|\a^{1/2} (u-u^{ls}_h)\|_0 \leq C \|\a^{1/2}\nabla (u-u^{ls}_h)\|_0,
\eeq
for a constant $C>0$ independent of $\a$.
Assuming that we have some $L^2$-error bound $\|u-u^{ls}_h\|_0 \leq C h^{r} \|\nabla (u-u^{ls}_h)\|_0$ for some regularity $r>0$ and $h$ being the maximum size of the mesh, see discussions in \cite{CK:06}, then $\|\a^{1/2} (u-u^{ls}_h)\|_0 \leq C \|\a^{1/2}\nabla (u-u^{ls}_h)\|_0$ for $C$ independent of $\a$ is possible with a small enough $h$. The situation of a non-uniform mesh and a solution with a low regularity is less clear, but realizing that \eqref{error_Poincar\'e} for the error $u-u^{ls}_h$ is what we needed is helpful to explain the mildness of non-robustness of the LSFEM.
\end{rem}

\begin{rem}
In \cite{BCG:15}, the same error estimator is proposed for the first augmented mixed method. In the proof of  \cite{BCG:15}, the same technique as the coercivity proof of the LSFEM  \eqref{ls_coe} is used. Since the Galerkin orthogonality \eqref{GO} is not used, the analysis in \cite{BCG:15} is not robust.
\end{rem}

\section{The second augmented mixed formulation: a mesh-weighted version}
\setcounter{equation}{0}
\subsection{The second augmented mixed formulation}
In this formulation, we choose $\theta$ to be a piecewisely defined function such that $\theta|_K=h^2_K$, for $K\in\cT$.

We consider the finite element approximation in two pairs: 
$RT_{0,N}\times S_{1,D}\subset \bX$ and $BDM_{1,N}\times S_{2,D}\subset \bX$. The notation $\Sigma_{h,N}\times V_{h,D}$ is used to represent these two choices.
The discrete problem is: find $(\bsigma_{2,h},u_{2,h}) \in \Sigma_{h,N}\times V_{h,D}$ such that
\beq\label{disweak2}
B_2((\bsigma_{2,h},u_{2,h}),(\btau_h,v_h))=F_2(\btau_h,v_h), \quad \forall (\btau_h,v_h)\in \Sigma_{h,N}\times V_{h,D},
\eeq
where $B_2$ and $F_2$ are the corresponding forms \eqref{Btheta1} and \eqref{Ftheta} with $\theta|_K=h_K^2$ for any $K\in\cT$. Based on the discussions in Section \ref{analysis}, we have the well-posedness of discrete problem \eqref{disweak2}. Let $\tri(\cdot,\cdot)\tri_2$ be the norm defined in \eqref{thetanorm} with $\theta|_K=h^2_K$, for $K\in\cT$. We also have the following locally robust and optimal a priori error estimate.

\begin{thm}\label{a_priori_mixed2}
Let $(\bsigma,u)$ be the solution of \eqref{eq_Darcy} and $(\bsigma_{2,h},u_{2,h})$ be the solution of problem \eqref{disweak2}, respectively. The following best approximation result is true:
\begin{eqnarray}\label{apriori2}
\tri(\bsigma-\bsigma_{2,h},u-u_{2,h})\tri_2 &\leq& 2\inf_{(\btau_h,v_h)\in \Sigma_{h,N}\times V_{h,D}}\tri(\bsigma-\btau_h,u-v_h)\tri_2.
\end{eqnarray}
For the $RT_{0,N}\times S_{1,D}$ approximation, under Assumption \ref{asmp_A} on the coefficients, if we further assume that $u|_K\in H^{1+s_K}(K)$, $(\nabla u-\bff)|_K\in H^{q_K}(K)^d$, $g|_K\in H^{t_K}(K)$ for $K\in\cT$, where the local regularity indexes $s_K$, $q_K$, and $t_K$ satisfy the following assumptions: $0<s_K\leq 1$ in two dimensions and $1/2<s_K\leq 1$ in three dimensions,  $1/2<q_K\leq 1$ with the constant $C_{rt}>0$ being unbounded as $q_K\downarrow 1/2$, and $0<t_K\leq 1$, then the following local robust and local optimal a priori error estimate holds: there exists a constant $C$ independent of $\a$ and the mesh-size, such that
\beq \label{a_priori_mixed_2}
\tri(\bsigma-\bsigma_{2,h},u-u_{2,h})\tri_2 \leq C\sum_{K\in\cT}\a_K^{1/2}\left(h_K^{s_K}|\nabla u|_{s_K,K}+ C_{rt} h_K^{q_K}|\nabla u-\bff|_{q_K,K} + \a_K^{-1}h_K^{1+t_K}|g|_{t_K,K}
\right).
\eeq
For the $BDM_{1,N}\times S_{2,D}$ approximation, under Assumption \ref{asmp_A} on the coefficients, if we further assume that $u\in H^{3}(\O)$, $(\nabla u-\bff)|_K\in H^{2}(K)^d$, and $g|_K\in H^{1}(K)$ for $K\in\cT$, then the following local robust and local optimal a priori error estimate holds: there exists a constant $C$ independent of $\a$ and the mesh-size, such that
\beq \label{a_priori_mixed_2bdm}
\tri(\bsigma-\bsigma_{2,h},u-u_{2,h})\tri_2 \leq C\sum_{K\in\cT}\a_K^{1/2}h_K^2\left(|\nabla u|_{2,K}+|\nabla u-\bff|_{2,K} + \a_K^{-1}|g|_{1,K}
\right).
\eeq
\end{thm}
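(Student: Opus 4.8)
The plan is to derive all three displayed bounds from the abstract best-approximation estimate, so that essentially no new machinery is needed. First I would obtain \eqref{apriori2} by simply specializing the general Cea-type result of Theorem \ref{a_priori} to the present choice $\theta|_K=h_K^2$; the coercivity \eqref{ine_coe1} and continuity \eqref{ine_conA1} of $B_\theta$, together with the Galerkin orthogonality \eqref{eq_ortho}, hold verbatim and give the factor $2$. The entire quantitative content then reduces to estimating the infimum on the right-hand side by inserting a single well-chosen pair of interpolants and bounding each of the three contributions to $\tri\,\cdot\,\tri_2^2$ separately.

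For the $RT_{0,N}\times S_{1,D}$ pair I would take $v_h=I^{nodal}_h u$ and $\btau_h=I^{rt}_h\bsigma$. Under Assumption \ref{asmp_A} ($A=\a I$ with $\a$ piecewise constant) the squared norm splits element-wise as
\begin{align*}
\tri(\bsigma-I^{rt}_h\bsigma,\,u-I^{nodal}_h u)\tri_2^2
&= \sum_{K\in\cT}\a_K\|\nabla(u-I^{nodal}_h u)\|_{0,K}^2
+\sum_{K\in\cT}\a_K^{-1}\|\bsigma-I^{rt}_h\bsigma\|_{0,K}^2 \\
&\quad +\sum_{K\in\cT} h_K^2\a_K^{-1}\|\gradt(\bsigma-I^{rt}_h\bsigma)\|_{0,K}^2 .
\end{align*}
The first sum is controlled directly by \eqref{nodal_inter}. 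For the second and third sums the key algebraic step is to invoke the constitutive relation $\bsigma=-\a(\nabla u-\bff)$ and the equilibrium relation $\gradt\bsigma=g$; since $\a$ is constant on each $K$, the local seminorms transfer cleanly, $|\bsigma|_{q_K,K}=\a_K|\nabla u-\bff|_{q_K,K}$ and $|\gradt\bsigma|_{t_K,K}=|g|_{t_K,K}$. Feeding \eqref{RT_inter1} into the second sum yields $\a_K^{-1}(C_{rt}h_K^{q_K}\a_K|\nabla u-\bff|_{q_K,K})^2=C_{rt}^2\a_K h_K^{2q_K}|\nabla u-\bff|_{q_K,K}^2$, and feeding the commuting divergence estimate \eqref{RT_inter2} into the third sum yields $h_K^2\a_K^{-1}(Ch_K^{t_K}|g|_{t_K,K})^2=C^2\a_K^{-1}h_K^{2+2t_K}|g|_{t_K,K}^2$. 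Taking square roots and using $(\sum_j x_j^2)^{1/2}\le\sum_j|x_j|$ reproduces exactly the three terms of \eqref{a_priori_mixed_2}, with the powers of $\a_K$ recombining into $\a_K^{1/2}$, $\a_K^{1/2}$, and $\a_K^{-1/2}=\a_K^{1/2}\a_K^{-1}$ respectively.

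The $BDM_{1,N}\times S_{2,D}$ case is identical in structure, now with $v_h=I_h u\in S_{2,D}$ and $\btau_h=I^{bdm}_h\bsigma$: I would apply \eqref{nodal_inter2} to the first sum (using that $|u|_{3,K}$ is equivalent, up to a dimensional constant, to $|\nabla u|_{2,K}$), \eqref{BDM_inter1} to the second, and the BDM commuting divergence estimate with $t_K=1$ to the third; the extra factor $h_K$ coming from $\theta=h_K^2$ again upgrades the divergence contribution by one power of $h_K$, delivering $h_K^2|g|_{1,K}$ and hence \eqref{a_priori_mixed_2bdm} (the fixed constant $C_{bdm}$ being absorbed into $C$, unlike $C_{rt}$ which must be kept explicit since it blows up as $q_K\downarrow 1/2$). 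I do not expect a genuine obstacle here: the only point demanding care is the bookkeeping of the weights $\a_K^{\pm1/2}$, and the robustness hinges entirely on the piecewise-constant structure of Assumption \ref{asmp_A}, which is what lets each local seminorm of $\bsigma$ be rewritten as $\a_K$ times a seminorm of $\nabla u-\bff$ so that the coefficients in \eqref{a_priori_mixed_2} and \eqref{a_priori_mixed_2bdm} emerge free of $\a$.
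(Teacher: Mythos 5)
Your proposal is correct and follows exactly the paper's route: the paper proves \eqref{apriori2} by specializing the Cea-type estimate of Theorem \ref{a_priori}, and then obtains \eqref{a_priori_mixed_2} and \eqref{a_priori_mixed_2bdm} by inserting the nodal and RT/BDM interpolants and invoking \eqref{nodal_inter}, \eqref{RT_inter1}, \eqref{RT_inter2}, \eqref{nodal_inter2}, and \eqref{BDM_inter1}, just as you do (the paper states this only as ``almost identical to Theorem \ref{a_priori_mixed1} with changes due to the extra factor $h$''). Your element-wise bookkeeping of the weights $\a_K^{\pm 1/2}$ and the use of $\bsigma=-\a(\nabla u-\bff)$, $\gradt\bsigma=g$ to transfer the local seminorms is precisely the omitted detail, carried out correctly.
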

The proof of the theorem is almost identical to that of Theorem \ref{a_priori_mixed1} with some necessary changes due to the extra factor $h$. 

\begin{rem}
We give some explanations that why the mesh-weighted second augmented mixed formulation is suggested. In the standard mixed formulation \eqref{eq_dm} for Darcy's equation, the divergence of the flux $\gradt\bsigma = g$ is a known quantity. For the standard dual mixed formulation \eqref{eq_dm}, we can easily derive the robust best approximation in the $L^2$-norm of the flux alone without invoking the approximation of the divergence of $\bsigma$ and the smoothness of $g$, see for example, Theorems 2 and 3 of \cite{Zhang:20mixed}. On the contrary, for the first augmented mixed formulation \eqref{disweak1}, its a priori error estimate \eqref{a_priori_mixed_1} is done for the combined norm $\tri\cdot\tri_1$. Thus, the error of the flux measured in the $L^2$-norm is influenced by the approximation of the divergence of $\bsigma$, which depends on the regularity of $g$ on the element $K\in\cT$. For example, for the case $\bff=0$ and  $u\in H^2(\O)$, we get $s_K=q_K=1$ for all $K\in\cT$ in \eqref{a_priori_mixed_1}. However, if the regularity of $g$ is low ($<1$), then the error of the flux measured in the $L^2$-norm is dominated by the bad approximation of the divergence of $\bsigma$, which is worse than the standard mixed formulation. Such sub-optimal result also appears in the LSFEM \eqref{lsfem1h} since all the terms are also coupled in \eqref{lsfem1h}. 

On the other hand, for the second augmented mixed method, the convergence order of $\|\a^{1/2}\nabla(u-u_{2,h})\|_0$ and $\|\a^{-1/2}\bsigma-\bsigma_{2,h})\|_0$ can  still be optimal, even if the regularity of $g$ is low. For example, for the case $\bff=0$ and  $u\in H^2(\O)$, as long as the regularity of $g|_K$ on each element $K\in\cT$, $t_K>0$, we can still get order $h$ convergence in \eqref{a_priori_mixed_2} for the $RT_{0,N}\times S_{1,D}$ approximation. For the $BDM_{1,N}\times S_{2,D}$ approximation, assuming that $u\in H^{3}(\O)$ and $(\nabla u-\bff)|_K\in H^{3}(K)^d$, we only requires $g|_K\in H^{1}(K)$ to get the optimal convergence.
\end{rem}

\subsection{A posteriori error analysis}
Define the global a posteriori error estimator:
\beq\label{ap_aug2}
\eta_2(\bsigma_{2,h},u_{2,h}) = \Big\{\|\a^{-1/2}h(g-\gradt\bsigma_{2,h})\|^2_{0} + \|\a^{1/2}(\bff-\nabla u_{2,h}-\a^{-1}\bsigma_{2,h})\|^2_{0}\Big\}^{1/2},
\eeq
and its local indicator 
\begin{eqnarray*}
\eta_{2,K}(\bsigma_{2,h},u_{2,h}) &=& \Big\{\|\a^{-1/2}h_K(g-\gradt\bsigma_{2,h})\|^2_{0,K} + \|\a^{1/2}(\bff-\nabla u_{2,h}-\a^{-1}\bsigma_{2,h})\|^2_{0,K}\Big\}^{1/2}.
\end{eqnarray*}

\begin{thm}\label{posteriori2}
Let $(\bsigma,u)$ be the solution of \eqref{eq_Darcy} and $(\bsigma_{2,h},u_{2,h})$ be the solution of problem \eqref{disweak2}, respectively. Assume that Assumption \ref{asmp_A} on the coefficients and Assumption \ref{asmp_QMA} (QMA) are true, then there exists positive constants $C$ independent of $\a$ and the mesh-size such that the following reliability holds:
\beq \label{rel_eta2}
\tri(\bsigma-\bsigma_{2,h}, u-u_{2,h})\tri_2 \leq C\eta_2(\bsigma_{2,h},u_{2,h}).
\eeq
\end{thm}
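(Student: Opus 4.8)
The plan is to mirror the proof of Theorem \ref{posteriori1}, starting from the error representation of Lemma \ref{errorrep} with the mesh-dependent choice $\theta|_K = h_K^2$ and the robust Cl\'ement interpolant $v_h = I_{rcl} e_2$, writing $\bE_2 = \bsigma - \bsigma_{2,h}$ and $e_2 = u - u_{2,h}$. Under Assumption \ref{asmp_A} the identity becomes
\beq\notag
\tri(\bE_2,e_2)\tri^2_2 = (\bff-\a^{-1}\bsigma_{2,h}-\nabla u_{2,h},\bE_2+\a\nabla (e_2- I_{rcl} e_2)) + (h^2\a^{-1}(g-\gradt\bsigma_{2,h}),\gradt\bE_2) + 2(g-\gradt\bsigma_{2,h}, e_2- I_{rcl} e_2).
\eeq
I would bound the three terms separately by Cauchy--Schwarz so that each is controlled by a factor of $\eta_2$ times a factor of $\tri(\bE_2,e_2)\tri_2$, then divide through.

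The first two terms are routine. The constitutive term is handled exactly as for $\theta=1$: Cauchy--Schwarz yields $\|\a^{1/2}(\bff-\a^{-1}\bsigma_{2,h}-\nabla u_{2,h})\|_0$ times $\|\a^{-1/2}\bE_2\|_0 + \|\a^{1/2}\nabla(e_2-I_{rcl}e_2)\|_0$, and \eqref{rcl} gives $\|\a^{1/2}\nabla(e_2-I_{rcl}e_2)\|_0 \leq C\|\a^{1/2}\nabla e_2\|_0$. For the equilibrium term I split $h_K^2=h_K\cdot h_K$ and pair one factor of $h_K$ with each argument, so that elementwise Cauchy--Schwarz produces $\|h\a^{-1/2}(g-\gradt\bsigma_{2,h})\|_0$ times $\|h\a^{-1/2}\gradt\bE_2\|_0$; the latter factor is \emph{precisely} one of the three contributions to $\tri(\bE_2,e_2)\tri_2$, which is exactly where the mesh weighting of $\theta$ pays off.

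The decisive step is the third term $2(g-\gradt\bsigma_{2,h}, e_2-I_{rcl}e_2)$, which is where the weighting differs from the $\theta=1$ case. In Theorem \ref{posteriori1} this was paired with the \emph{unweighted} residual $\|\a^{-1/2}(g-\gradt\bsigma_{1,h})\|_0$, but $\eta_2$ carries only the mesh-weighted residual $\|h\a^{-1/2}(g-\gradt\bsigma_{2,h})\|_0$. To match this, I would insert $h_K^{-1}h_K=1$ elementwise and split via Cauchy--Schwarz as
\beq\notag
2\sum_{K\in\cT}\big(h_K\a^{-1/2}(g-\gradt\bsigma_{2,h}),\, h_K^{-1}\a^{1/2}(e_2-I_{rcl}e_2)\big)_K.
\eeq
Here the robust Cl\'ement estimate \eqref{rcl} supplies the crucial extra power of $h_K$: it gives $\|\a^{1/2}(e_2-I_{rcl}e_2)\|_{0,K}\leq Ch_K\|\a^{1/2}\nabla e_2\|_{0,\Delta_K}$, so $\|h_K^{-1}\a^{1/2}(e_2-I_{rcl}e_2)\|_{0,K}\leq C\|\a^{1/2}\nabla e_2\|_{0,\Delta_K}$. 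Summing and invoking the finite-overlap property of the patches $\Delta_K$ then bounds this term by $C\|h\a^{-1/2}(g-\gradt\bsigma_{2,h})\|_0\,\|\a^{1/2}\nabla e_2\|_0$.

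Combining the three bounds gives $\tri(\bE_2,e_2)\tri^2_2 \leq C\,\eta_2(\bsigma_{2,h},u_{2,h})\,\tri(\bE_2,e_2)\tri_2$, and dividing yields \eqref{rel_eta2}. The main obstacle is exactly the bookkeeping in the third term: one must check that the single power of $h_K$ furnished by \eqref{rcl} precisely cancels the $h_K^{-1}$ introduced to expose the mesh-weighted residual, and that the patch summation stays robust in $\a$ --- which it does, since the Cl\'ement operator of \eqref{rcl} is constructed to be robust under Assumption \ref{asmp_QMA} (QMA).
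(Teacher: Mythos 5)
Your proposal is correct and follows essentially the same route as the paper's proof: the error representation of Lemma \ref{errorrep} with $\theta|_K=h_K^2$ and $v_h=I_{rcl}e_2$, Cauchy--Schwarz pairing the mesh-weighted residual with $\|h\a^{-1/2}\gradt\bE_2\|_0$ and with $\|h^{-1}\a^{1/2}(e_2-I_{rcl}e_2)\|_0$, and the Cl\'ement estimate \eqref{rcl} supplying the extra power of $h_K$ that cancels the inserted $h_K^{-1}$. Your explicit identification of this cancellation as the decisive step, and the remark on the finite overlap of the patches $\Delta_K$, are exactly the (implicit) content of the paper's argument.
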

\begin{proof}
Let $\bE_2 = \bsigma-\bsigma_{2,h}$ and $e_2= u-u_{2,h}$. In \eqref{error_rep}, let $\theta|_K=h^2_K$ and $v_h = I_{rcl}e_2$, we have
$$
\tri(\bE_2,e_2)\tri^2_2 =  (\bff-\a^{-1}\bsigma_{2,h}-\nabla u_{2,h},\bE_2+\a\nabla (e_2- I_{rcl} e_2))+(g-\gradt\bsigma_{2,h},\a^{-1}h^2\gradt\bE_2+2(e_2- I_{rcl} e_2)).
 $$
Applying Cauchy-Schwarz and triangle inequalities, we get
\begin{eqnarray} \label{EE2}
\tri(\bE_2,e_2)\tri^2_2 &\leq& \|\a^{1/2}(\bff-\a^{-1}\bsigma_{2,h}-\nabla u_{2,h})\|_0(\|\a^{-1/2}\bE_2\|_{0}+\|\a^{1/2}\nabla (e_2- I_{rcl} e_2)\|_0)\\ \nonumber
&&+\|h\a^{-1/2}(g-\gradt\bsigma_{2,h})\|_{0} (\|h\a^{-1/2}\gradt\bE_2\|_0 +2\|h^{-1}\a^{1/2}(e_2- I_{rcl} e_2)\|_0.
\end{eqnarray}
By the robust Cl\'{e}ment interpolation result \eqref{rcl} under Assumption \ref{asmp_QMA}. we have the following robust results,
$$
\|\a^{1/2}\nabla (e_2- I_{rcl} e_2)\|_0 \leq C \|\a^{1/2}\nabla e_2\|_0 \quad\mbox{and}\quad
\|\a^{1/2}(e_2- I_{rcl} e_2)\|_0 \leq C \|h\a^{1/2}\nabla e_2\|_0.
$$ 
Substitute these two robust results into \eqref{EE2}, we get
\begin{eqnarray*} 
\tri(\bE_2,e_2)\tri^2_2 &\leq& C (\|\a^{1/2}(\bff-\a^{-1}\bsigma_{2,h}-\nabla u_{2,h})\|_0+\|h\a^{-1/2}(g-\gradt\bsigma_{2,h})\|_{0}) \\
&&(\|\a^{-1/2}\bE_2\|_{0}+\|h\a^{-1/2}\gradt\bE_2\|_{0}+\|\a^{1/2}\nabla e_2\|_0)\\ 
&\leq&C\eta_2(\bsigma_{2,h}, u_{2,h})\tri(\bE_2,e_2)\tri_2.
\end{eqnarray*}
The robust reliability result \eqref{rel_eta2} is proved. 
\end{proof}

Then using similar techniques of the proof of Theorem \ref{efficiency}, we give the efficiency for the error estimators or indicators of $\eta_{2,K}(\bsigma_{2,h},u_{2,h})$.
\begin{thm}\label{efficiency2}
Let $(\bsigma,u)$ be the solution of \eqref{eq_Darcy} and $(\bsigma_{2,h},u_{2,h})$ be the solution of problem \eqref{disweak2}, respectively.. Assume that Assumption \ref{asmp_A} on the coefficients is true, we have the following efficiency:
\beq\label{eff_eta2u}
\eta_{2,K}(\bsigma_{2,h},u_{2,h}) \leq \sqrt{2} \tri(\bsigma-\bsigma_{2,h},u-u_{2,h})\tri_{2,K},
\quad \forall K\in\cT.
\eeq
\end{thm}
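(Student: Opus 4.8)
The plan is to mirror almost verbatim the proof of Theorem \ref{efficiency}, since the only structural difference between $\eta_{1,K}$ and $\eta_{2,K}$ is the mesh-weight $h_K$ attached to the divergence residual, and this weight is exactly matched by the $\theta|_K=h_K^2$ weighting built into the norm $\tri\cdot\tri_2$. The conceptual engine is the same: both residuals appearing in $\eta_{2,K}(\bsigma_{2,h},u_{2,h})$ become \emph{exactly} error quantities once the strong form \eqref{eq_Darcy} is substituted, so no interpolation, inverse estimate, or quasi-monotonicity is needed, and the efficiency constant stays robust.

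First I would rewrite each of the two terms in $\eta_{2,K}^2$ using the first-order system. From the constitutive equation $\gradt\bsigma=g$ I replace $g-\gradt\bsigma_{2,h}$ by $\gradt(\bsigma-\bsigma_{2,h})$, so the divergence residual becomes $\|h_K\a^{-1/2}\gradt(\bsigma-\bsigma_{2,h})\|_{0,K}^2$. From the equilibrium equation, $\a\bff=\bsigma+\a\nabla u$, i.e. $\bff=\a^{-1}\bsigma+\nabla u$, so the flux residual $\a^{1/2}\bff-\a^{-1/2}\bsigma_{2,h}-\a^{1/2}\nabla u_{2,h}$ collapses to $\a^{-1/2}(\bsigma-\bsigma_{2,h})+\a^{1/2}\nabla(u-u_{2,h})$.

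Next I would split the combined flux term with the elementary inequality $(a+b)^2\le 2a^2+2b^2$, giving
$$
\|\a^{-1/2}(\bsigma-\bsigma_{2,h})+\a^{1/2}\nabla(u-u_{2,h})\|_{0,K}^2
\le 2\|\a^{-1/2}(\bsigma-\bsigma_{2,h})\|_{0,K}^2+2\|\a^{1/2}\nabla(u-u_{2,h})\|_{0,K}^2.
$$
Adding back the divergence term and bounding its coefficient $1$ by $2$, the right-hand side is precisely $2\,\tri(\bsigma-\bsigma_{2,h},u-u_{2,h})\tri_{2,K}^2$ by the local form of \eqref{thetanorm} with $\theta|_K=h_K^2$, and taking square roots yields the constant $\sqrt{2}$ in \eqref{eff_eta2u}.

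There is essentially no hard step here; the estimate is purely algebraic and elementwise. The only point requiring care is the bookkeeping of the $h_K$ factor: I must verify that the weight $h_K$ on the divergence residual in $\eta_{2,K}$ coincides with the weight $\sqrt{\theta/\a}=h_K\a^{-1/2}$ on $\gradt\btau$ in $\tri\cdot\tri_{2,K}$, which it does by construction. Because the weights align exactly and the splitting constant is the universal $2$, the bound is robust in $\a$ and requires only Assumption \ref{asmp_A} (no QMA), exactly as in the first formulation.
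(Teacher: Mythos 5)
Your proposal is correct and is exactly the argument the paper intends: Theorem \ref{efficiency2} is stated with the remark that it follows by the same technique as Theorem \ref{efficiency}, namely substituting the strong form of \eqref{eq_Darcy} to turn both residuals into exact error quantities, splitting the flux term with $(a+b)^2\le 2a^2+2b^2$, and observing that the $h_K\a^{-1/2}$ weight on the divergence residual matches $\sqrt{\theta/\a}$ in $\tri\cdot\tri_{2,K}$. No gaps; the constant $\sqrt{2}$ and the robustness in $\a$ come out exactly as in the first formulation.
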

\subsection{Comparison with the mesh-weighted LSFEM} \label{comparison_2}
We have a corresponding mesh-weighted least-squares method for the second augmented mixed formulation.
Define the mesh-weighted least-squares functional as
\beq \label{lsfunctional_h}
J_h(\btau,v;\bff,g) := \|\a^{1/2}\nabla v +\a^{-1/2}\btau - \a^{1/2}\bff\|_0^2+  \|h\a^{-1/2}(\gradt\btau - g)\|_0^2, \quad (\btau,v)\in \bX.
\eeq
Then the mesh-weighted $L^2$-based least-squares minimization problem is: find $(\bsigma,u) \in \bX$, such that 
\beq  
J_h(\bsigma,u;\bff,g) = \inf_{(\btau,v)\in \bX} J_h(\btau,v;\bff,g).
\eeq 
Equivalently, it can be written in a weak form as: find $(\bsigma,u) \in \bX$, such that 
\beq 
b_h((\bsigma,u), (\btau,v)) = (\bff,\btau+\a\nabla v)+ (h^2\a^{-1}g,\gradt\btau), \quad \forall (\btau,v) \in \bX,
\eeq
where the mesh-weighted least-squares bilinear form $b_h$ is defined as follows:
\beq
b_h((\bchi,w), (\btau,v)) = (\bchi+\nabla w,\btau+\a\nabla v)+ (h^2\a^{-1}\gradt\bchi,\gradt\btau), \quad \forall (\bchi,w), (\btau,v) \in \bX.
\eeq
This kind of least-squares method is called the weighted $L^2$-discrete-least-squares principle; see Section 5.6.1 of \cite{BG:09}.

Using the same approximation as the second augmented mixed formulation, the mesh-weighted least-squares finite element method is: find $(\bsigma_{h}^{hls},u_h^{hls}) \in \Sigma_{h,N}\times V_{h,D}$, such that
\beq \label{lsfem2}
J_h(\bsigma^{hls}_h,u^{hls}_h;\bff,g) = \inf_{(\btau,v)\in \Sigma_{h,N}\times V_{h,D}} J_h(\btau,v;\bff,g).
\eeq 
Or, equivalently, find $(\bsigma_{h}^{hls},u_h^{hls}) \in \Sigma_{h,N}\times V_{h,D}$, such that,
\beq  \label{lsfem2h}
b_2(\bsigma_{h}^{hls},u_h^{hls}, (\btau,v)) = (\a^{-1}\bff,\btau+\a\nabla v)+ (h^2\a^{-1}g,\gradt\btau), \quad \forall (\btau,v) \in \Sigma_{h,N}\times V_{h,D}.
\eeq
The mathematical theory of the mesh-weighted LSFEM \eqref{lsfem2} or \eqref{lsfem2h} is much less satisfactory. Most importantly, we only have the following quasi-norm equivalence with $C_1$ and $C_2$ independent of the mesh size:
\beq \label{ls_equvalience2}
C_{1} h_{min}^2 \tri(\btau,v)\tri_2 ^2 \leq J_h(\btau,v;0,0) \leq C_{2} \tri(\btau,v)\tri_2 ^2, \quad \forall (\btau,v)\in \bX,
\eeq
where $h_{min} = \min \{ h_K, K\in\cT \}$. The coercivity can be easily derived from \eqref{ls_equvalience} and the definition of the norm. With only \eqref{ls_equvalience2} available, we can not expect mesh-independent a priori and a posteriori error estimates in the standard norm $\tri \cdot\tri_2$.

Another way to establish the analysis for the mesh-weighted LSFEM \eqref{lsfem2h} is to adopt the non-standard least-squares norm. Define 
\beq
\tri (\btau,v) \tri_{hls} = J_h^{1/2}(\btau,v;0,0) = ( \|\a^{1/2}\nabla v +\a^{-1/2}\btau\|_0^2+  \|h\a^{-1/2}\gradt\btau\|_0^2)^{1/2}, \quad (\btau,v)\in \bX.
\eeq
It is easy to see that 
$$
\tri (\btau,v) \tri_{hls} \leq \sqrt{2} \tri (\btau,v) \tri_{2}.
$$
Then we can establish the following best approximation a priori error estimate for the least-squares induced norm $\tri (\btau,v) \tri_{hls}$:
\beq\label{lowerbdhls}
\tri (\bsigma-\bsigma_{h}^{hls},u-u_{h}^{hls}) \tri_{hls} = \inf_{(\btau_h,v_h)\in \Sigma_{h,N}\times V_{h,D}} \tri (\bsigma-\btau_h,u-v_h) \tri_{hls} \leq  \sqrt{2} \inf_{(\btau_h,v_h)\in \Sigma_{h,N}\times V_{h,D}} \tri (\bsigma-\btau_h,u-v_h) \tri_2.
\eeq
Then we can get similar convergence results as in Theorem \ref{a_priori_mixed2}.

Let $(\bsigma_{a},u_{a}) \in \bX$, we can define the following mesh-weighted least-squares a posteriori error estimator:
\beq\label{ap_lsfemh}
\eta_{hls}(\bsigma_{a},u_{a}) := \Big (\|h\a^{-1/2}(g-\gradt\bsigma_{a})\|^2_{0} + \|\a^{1/2}(\bff-\nabla u_{a}-\a^{-1}\bsigma_{a})\|^2_{0}\Big )^{1/2} = J_h(\bsigma_{a},u_{a};\bff,g)^{1/2}.
\eeq
Let $\bE_a = \bsigma - \bsigma_{a}$ and $e_a = u-u_a$. We have the following identity:
\begin{eqnarray}\label{etahlsJh}
\eta_{hls}^2(\bsigma_{a},u_{a}) = J_h(\bsigma_a,u_a;\bff,g)  = J_h(\bE_a,e_a;0,0) = \tri (\bsigma-\bsigma_a,u-u_a) \tri_{hls}^2. 
\end{eqnarray}
Thus, if we are satisfied with the non-traditional mesh-weighted least-squares norm $\tri \cdot \tri_{hls}$, then we can use $\eta_{hls}^2(\bsigma_{h}^{hls},u_{h}^{hls})$ as the a posteriori error estimator for the method \eqref{lsfem2h}.  

In summary, we have robust and local optimal a priori error estimates for the mesh-weighted augmented mixed formulation with less regularity requirement on $g$. The mesh-weighted least-squares a posteriori error estimator is a robust error estimator for the method. This is an improvement for the mesh-weighted least-squares formulation where neither the a priori nor a posteriori error estimates are mesh-size and coefficient robust with respect to the standard norms.

\begin{rem}
The mesh-weighted least-squares formulation can be viewed as a practical version of the $H^{-1}$-least-squares method, see \cite{BLP:97}. 
Define the $H^{-1}$ least-squares functional as
\beq \label{lsfunctional_-1}
J_{-1}(\btau,v;\bff,g) := \|\a^{1/2}\nabla v +\a^{-1/2}\btau - \a^{1/2}\bff\|_0^2+  \|\a^{-1/2}(\gradt\btau - g)\|_{-1}^2, \quad (\btau,v)\in \bX.
\eeq
 A multilevel preconditioner of the Laplace operator is proposed to replace the $H^{-1}$ norm in \cite{BLP:97}.
To avoid the multilevel computation, a very coarse replacement is to replace the $H^{-1}$-norm with the mesh-weighted norm. Of course, we can only get a mesh-dependent norm-equivalence \eqref{ls_equvalience2}.
\end{rem}

\section{Numerical Experiments}
\setcounter{equation}{0}
In this section, we present serval numerical experiments to verify our findings in previous sections. Our main test problem is the interface problem with discontinuous coefficients from \cite{Kellogg:74,CD:02,CZ:09}. Let $\O=(-1,1)^2$ and let $\widetilde{u}(r,\theta)=r^\g\mu(\theta)$
in polar coordinates with
\begin{equation}\label{mutheta}
\mu(\theta)=\left\{
\begin{array}{ll}
\cos((\pi/2-\phi)\g)\cdot((\theta-\pi/2+\rho)\g) & \mbox{ if }0\leq\theta\leq \pi/2,
 \\
\cos(\rho \g)\cdot\cos((\theta-\pi+\phi)\g) & \mbox{ if }\pi/2\leq\theta\leq\pi,
\\
\cos(\rho \phi)\cdot\cos((\theta-\pi-\rho)\g) & \mbox{ if }\pi\leq\theta\leq3\pi/2,
\\
\cos((\pi/2-\rho)\g)\cdot((\theta-3\pi/2-\phi)\g) & \mbox{ if }3\pi/2\leq\theta\leq 2\pi.
\end{array}
\right.
\end{equation}
The coefficient  $\a$ is
\begin{equation}\notag
\a(x)=\left\{
\begin{array}{ll}
R & \mbox{ in } (0,1)^2\cup(-1,0)^2,
 \\
1 & \mbox{ in }\O\backslash([0,1]^2\cup[-1,0]^2).
\end{array}
\right.
\end{equation}
We choose the numbers $\g$, $\rho$, $\phi$, and $R$ such that $\gradt (\a\nabla \widetilde{u}) = 0$ in $\O$. The following nonlinear relations of  $\g$, $\rho$, $\phi$, and $R$ can be found in \cite{CD:02}:
\begin{equation}\label{nonlinear}
\left\{
\begin{array}{l}
R = -\tan((\pi/2-\phi)\g)\cdot\cot(\rho\g),\\
1/R = -\tan(\rho\g)\cdot\cot(\phi\g),\\
R = -\tan(\phi\g)\cdot\cot((\pi/2-\rho)\g),\\
0<\g<2,\\
\max(0,\pi\g-\pi)<2\g\rho<\min(\pi\g,\pi),\\
\max(0,\pi-\pi\g)<-2\g\phi<\min(\pi,2\pi-\pi\g).
\end{array}
\right.
\end{equation}
The function $\widetilde{u}(r,\theta)\in H^{1+\gamma-\epsilon}(\O),$ for any $\epsilon>0$. The regularity index $\g$ is less than $1$, The function $\widetilde{u}$  is singular at the origin. 
We give serval examples of the coefficients $\a$ and the corresponding numbers $\g$, $\rho$, and $\phi$ in Table \ref{tab_numbers}. These numbers can be computed by solving \eqref{nonlinear} using Newton's method. Some of them can also be found in \cite{CD:02}. 
\begin{table}[h]
\caption{Numbers of $\phi,\g$ and $R$ with $\rho = \pi/4.$}\label{tab_numbers}
\begin{tabular}{|c|c|c|c|}
\hline
       &$\phi$& $\g$  & $R$      \\\hline
$\mbox{Data}1$ & $-2.3561944901923448$& $0.50$  &  $5.82842712474619$ \\\hline
$\mbox{Data}2$ & $-7.06858347058882$& $0.20$  &  $39.8634581884533$   \\\hline
$\mbox{Data}3$ & $-9.68657734859297$& $0.15$  &  $71.3848801304590$   \\\hline
$\mbox{Data}4$ & $-14.92256510455152$& $0.10$  &  $161.447638797588$   \\\hline
\end{tabular}
\end{table}

Let $u := \widetilde{u}+u_0$ with $u_0(x,y) = \left\{
\begin{array}{ll}
x+1,&x\leq 0,\\
1&x>0.
\end{array}
\right.
$
Then $u$ is the solution of the following generalized Darcy's problem
\begin{equation}\label{eq_darcynumer}
\left\{
\begin{array}{lllll}
\gradt \bsigma    & =& 0 & \mbox{in } \O,
 \\[1mm]
\a\nabla u+ \bsigma  & =& \a\nabla u_0 & \mbox{in } \O.
\end{array}
\right.
\end{equation}
This will be our main problem to do numerical tests.

We propose the following criteria to test the robustness of the methods with respect to $\a$. 
 Let $(\bsigma_h,u_h)$ be the numerical solutions computed by the numerical methods (augmented mixed methods and LSFEMs) discussed in the paper; we want to compare the ratio of the error in the energy norm and its corresponding a posteriori error estimator. 
For the first kind augmented mixed methods and the $L^2$-LSFEM, we measure the error in $\tri (\bsigma-\bsigma_h,u-u_h) \tri_1$ (the definition in \eqref{thetanorm} with $\theta=1$) and the a posteriori error estimator is the one based on the $L^2$-LS-functional $\eta_{ls}(\bsigma_h,u_h)=J^{1/2}(\bsigma_h,u_h;0,0)$.  For the second kind augmented mixed methods and mesh-weighted LSFEM, we measure the error in $\tri (\bsigma-\bsigma_h,u-u_h) \tri_2$ (the definition in \eqref{thetanorm} with $\theta=h_K^2$) and the a posteriori error estimator is the one based on the mesh-weighted LS-functional $\eta_{hls}(\bsigma_h,u_h)=J^{1/2}_h(\bsigma_h,u_h;0,0)$.  

If the a posteriori error estimator is robust, then for different $\alpha$, the ratio or the so-called  effectivity index 
\beq
\mbox{eff-index} = \dfrac{\tri (\bsigma-\bsigma_h,u-u_h) \tri}{\eta}
\eeq
should be a constant.

The standard adaptive finite element method is based on the following loop: SOLVE, ESTIMATE, MARK, and REFINE. We use the D\"{o}rfler's bulk marking strategy. The relative error is computed by
$\mbox{rel-err} = \tri(\bsigma-\bsigma_h,u-u_h)\tri/\tri(\bsigma,u)\tri$.

We do not seek numerical examples to check the robust a priori error estimates.

\subsection{Convergence tests for adaptive augmented mixed methods with pure Dirichlet boundary conditions}
In this subsection, we use Data4 in  Table \ref{tab_numbers}. The main purpose is to show the convergence history and the final adaptive mesh for the augmented mixed methods. We choose the pure Dirichlet boundary condition. The parameter in  D\"{o}rfler's bulk marking strategy is $0.3$ and the stopping criteria is $\mbox{rel-err}\leq 0.010$.

In Figure \ref{fig_eta1mes1RT0}, we present the numerical test of the first adaptive augmented mixed method \eqref{disweak1}, with $RT_{0,N}\times S_{1,D}$ being the finite element space. 
On the left of Figure  \ref{fig_eta1mes1RT0}, we show the reference line $\mbox{Dofs}^{-1/2}$, the decay of the error estimator, and the error measured in $\tri(\cdot,\cdot)\tri_1$. On the right of Figure  \ref{fig_eta1mes1RT0}, the final mesh generated by $\eta_1(\bsigma_{1,h},u_{1,h})$ is presented after $75$ loops of bisection. The final DOF is $4921$. The convergence and the final mesh are both optimal, and the final mesh is similar to the results presented in \cite{CD:02,CZ:09}.

\begin{figure}[h]
  \centering
  \begin{minipage}[t]{0.45\linewidth}
  \includegraphics[scale=0.38]{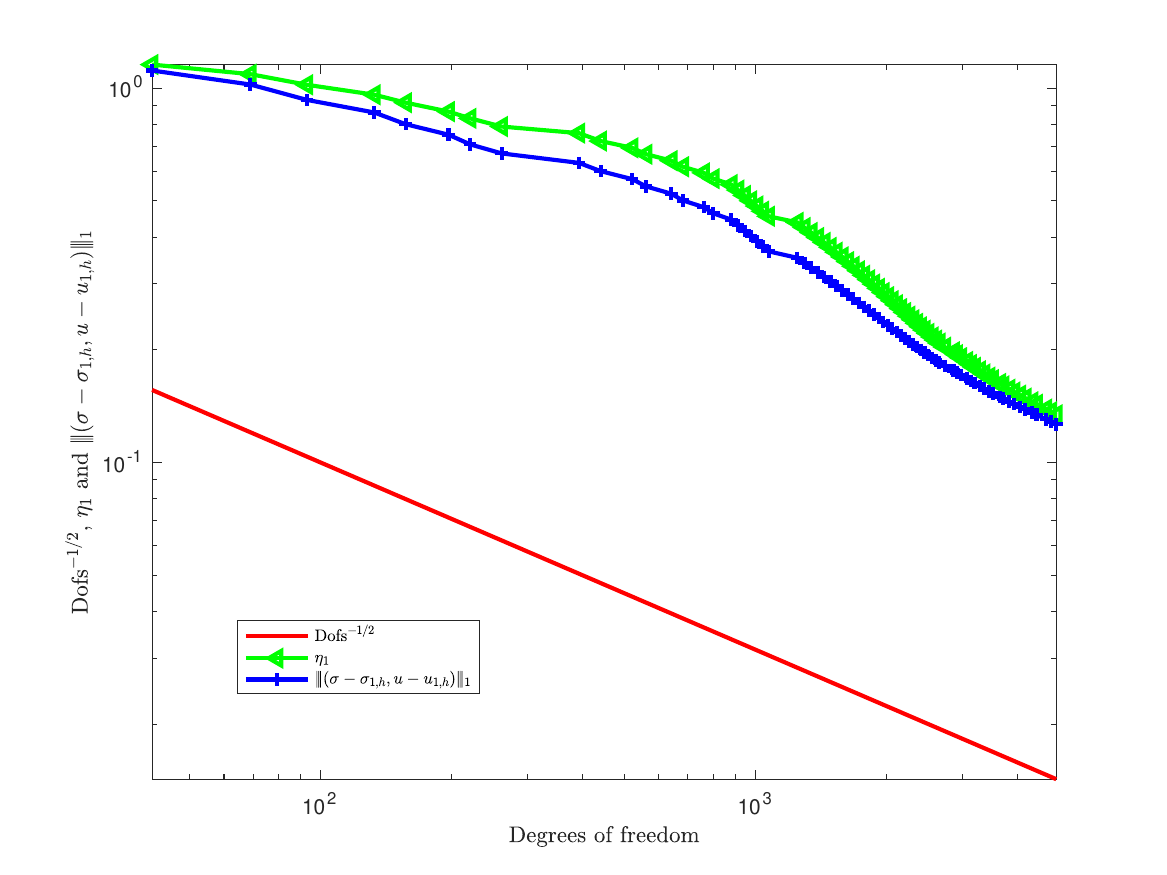}   
  \end{minipage}
  \begin{minipage}[t]{0.45\linewidth}
  \includegraphics[scale=0.5]{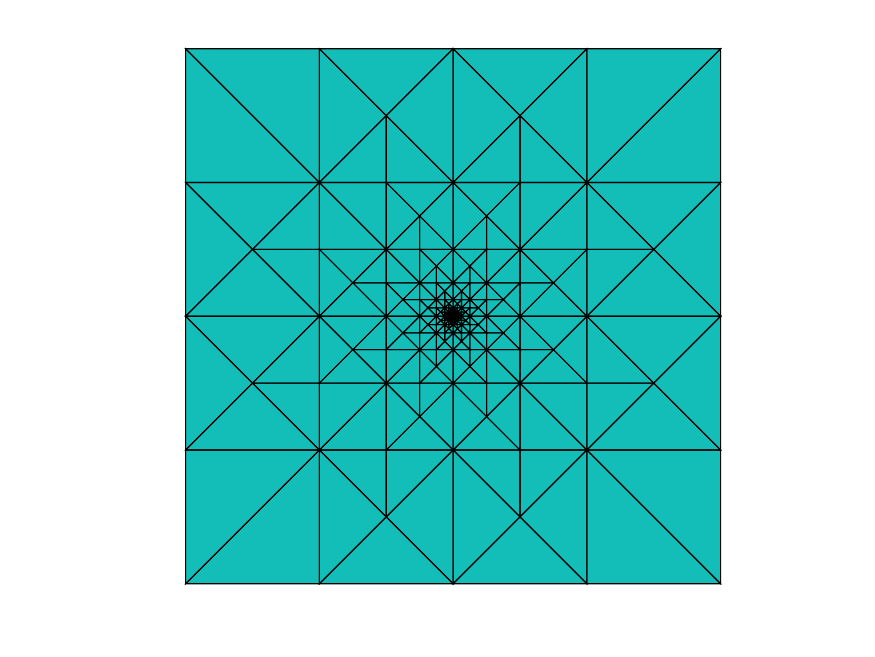}
  \end{minipage}
    \caption{Adaptive convergence results and final mesh for the first adaptive augmented mixed method \eqref{disweak1}}
    \label{fig_eta1mes1RT0}
\end{figure}

In Figure \ref{fig_eta2mesh2RT0}, we present the numerical test of the second (mesh-weighted) adaptive augmented mixed method \eqref{disweak2}, with $RT_{0,N}\times S_{1,D}$ being the finite element space. On the left of Figure  \ref{fig_eta2mesh2RT0}, we show the reference line $\mbox{Dofs}^{-1/2}$, the decay of the error estimator, and the error measured in $\tri(\cdot,\cdot)\tri_2$. On the right of Figure  \ref{fig_eta2mesh2RT0}, the final mesh generated by $\eta_2(\bsigma_{2,h},u_{2,h})$ is presented after $86$ loops of bisection. The final DOF is $4621$. The convergence and the final mesh are both optimal, and the final mesh is similar to the results presented in \cite{CD:02,CZ:09}.
\begin{figure}[h]
  \centering
  \begin{minipage}[t]{0.45\linewidth}
  \includegraphics[scale=0.38]{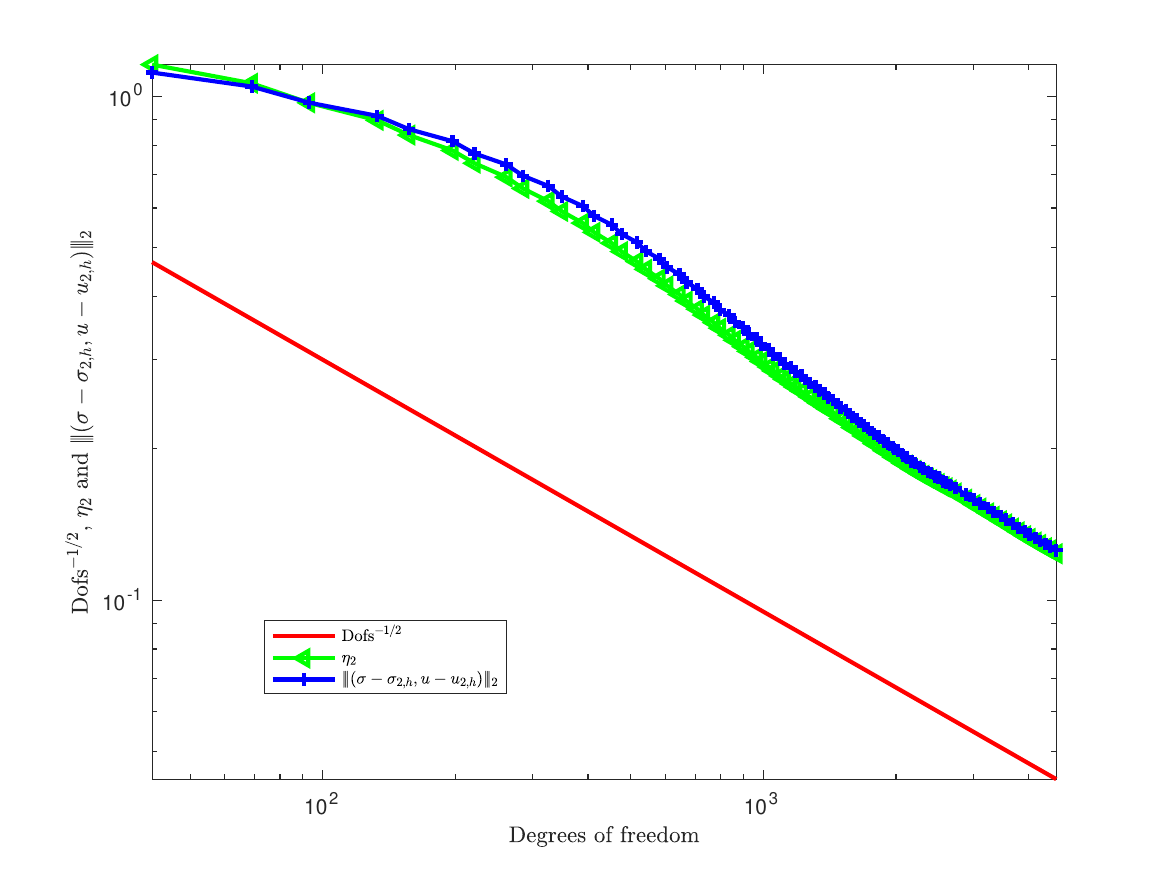}   
  \end{minipage}
  \begin{minipage}[t]{0.45\linewidth}
  \includegraphics[scale=0.5]{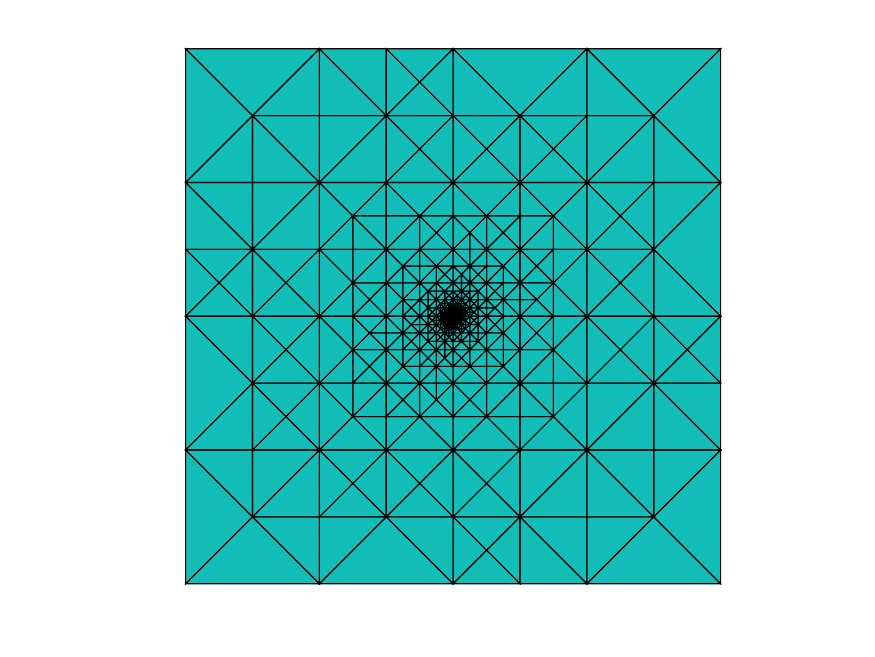}
  \end{minipage}
    \caption{Adaptive convergence results and final mesh for the second (mesh-weighted) adaptive augmented mixed method using $RT_{0,N}\times S_{1,D}$}
    \label{fig_eta2mesh2RT0}
\end{figure}

In Figure \ref{fig_eta2mesh2BDM1}, we present the numerical test of the second (mesh-weighted) adaptive augmented mixed method \eqref{disweak2}, with $BDM_{1,N}\times S_{2,D}$. The reference line in this case is $\mbox{Dofs}^{-1}$. The final DOFs is $1997$ after $49$ times of bisection. 
\begin{figure}[h]
  \centering
  \begin{minipage}[t]{0.45\linewidth}
  \includegraphics[scale=0.38]{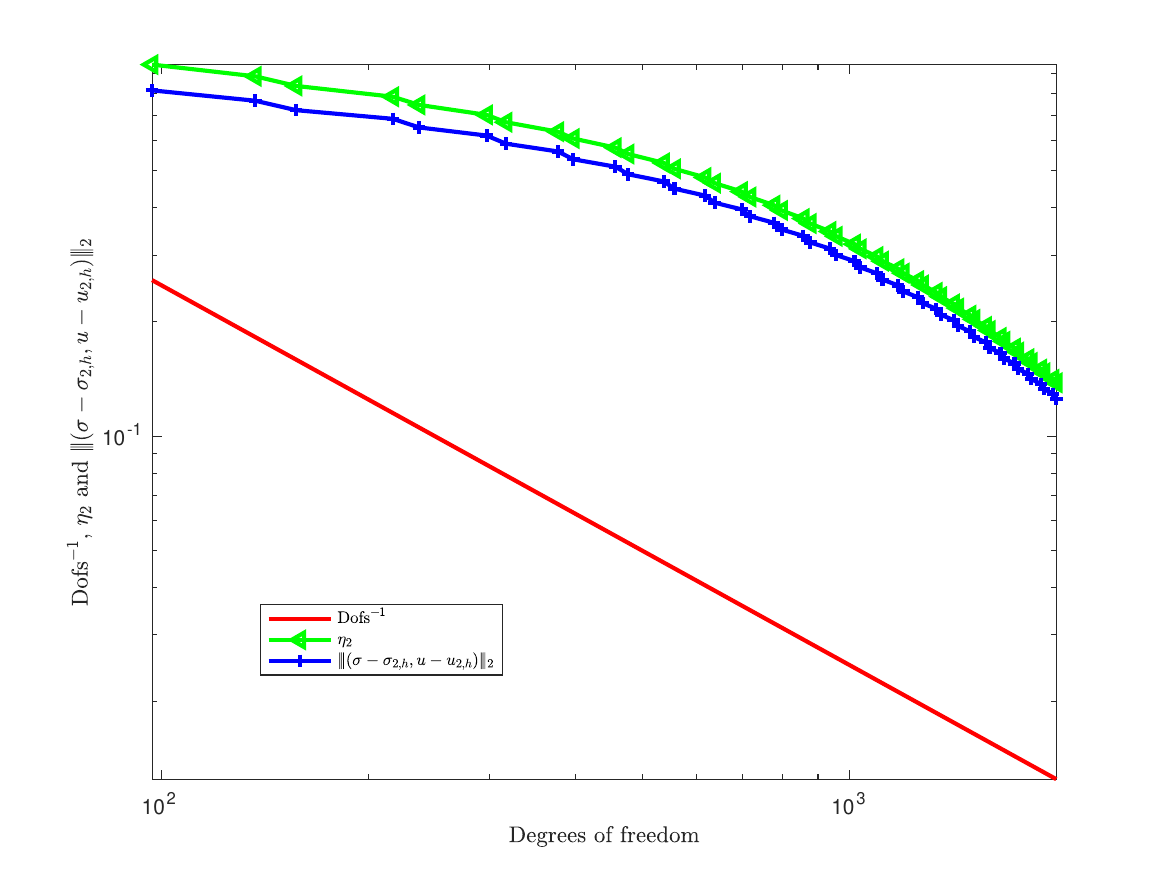}   
  \end{minipage}
  \begin{minipage}[t]{0.45\linewidth}
  \includegraphics[scale=0.5]{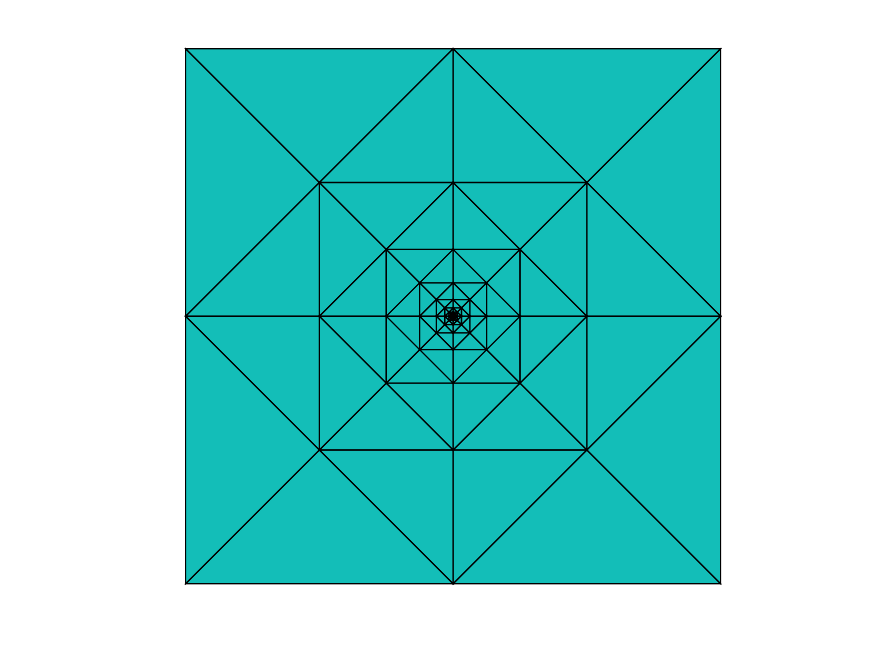}
  \end{minipage}
    \caption{Adaptive convergence results and final mesh for the second (mesh-weighted) adaptive augmented mixed method using $BDM_{1,N}\times S_{2,D}$}
    \label{fig_eta2mesh2BDM1}
\end{figure}

\subsection{Robustness tests for adaptive augmented mixed methods with pure Dirichlet boundary conditions}
In this subsection, we present the numerical results of the effectivity index for adaptive augmented mixed methods with pure Dirichlet boundary conditions for different $\a$ to check the robustness of the methods. The same marking strategy and stopping criteria of the previous subsection are used.
From Table \ref{tab_AugRT_PureD} to Table \ref{tab_AughBDM_PureD}, we show the eff-indexes for different $\a$ for the first and second augmented methods. As seen from these tables, the eff-index is almost a constant for different $\a$; this verifies least-squares a posteriori error estimators for both augmented mixed methods are robust. 
\begin{table}[htbp]
\caption{The first adaptive augmented mixed method for different $\a$, pure Dirichlet BC: eff-index, number of refinements $k$, number of elements $n$, the final $\eta_1(\bsigma_{1,h},u_{1,h})$,  and the final error $\tri(\bsigma-\bsigma_{1,h},u-u_{1,h})\tri_1$}
\label{tab_AugRT_PureD}
\begin{tabular}{|c|c|c|c|c|c|}
\hline
&$\mbox{eff-index}$& $k$  & $n$  & $\eta_1(\bsigma_{1,h},u_{1,h})$ &
$\tri(\bsigma-\bsigma_{1,h},u-u_{1,h})\tri_1$\\\hline
$\mbox{Data}1$ & $1.0006$& $34$  &  $15824$ & $0.0250$ & $0.0250$ \\\hline
$\mbox{Data}2$ & $1.0075$& $64$  &  $7216$ & $0.0621$ & $0.0616$ \\\hline
$\mbox{Data}3$ & $1.0179$& $71$  &  $4648$ & $0.0842$& $0.0828$ \\\hline
$\mbox{Data}4$ & $1.0605$& $75$  &  $2448$ & $0.1340$& $0.1264$ \\\hline
\end{tabular}
\end{table}

\begin{table}[htbp]
\caption{The second (mesh-weighted) adaptive augmented mixed method using $RT_{0,N}\times S_{1,D}$ for different $\a$, pure Dirichlet BC: eff-index, number of refinements $k$, number of elements $n$, the final $\eta_2(\bsigma_{2,h},u_{2,h})$,  and the final error $\tri(\bsigma-\bsigma_{2,h},u-u_{2,h})\tri_2$}
\label{tab_AughRT_PureD}
\begin{tabular}{|c|c|c|c|c|c|}
\hline
&$\mbox{eff-index}$& $k$  & $n$  & $\eta_2(\bsigma_{2,h},u_{2,h})$ &
$\tri(\bsigma-\bsigma_{2,h},u-u_{2,h})\tri_2$\\\hline
$\mbox{Data}1$ & $0.9963$& $35$  &  $15184$ & $0.0254$ & $0.0255$ \\\hline
$\mbox{Data}2$ & $0.9966$& $70$  &  $7088$ & $0.0616$ & $0.0618$ \\\hline
$\mbox{Data}3$ & $0.9949$& $80$  &  $4524$ & $0.0823$& $0.0827$ \\\hline
$\mbox{Data}4$ & $0.9847$& $86$  &  $2300$ & $0.1236$& $0.1256$ \\\hline
\end{tabular}
\end{table}

\begin{table}[htbp]
\caption{The second (mesh-weighted) adaptive augmented mixed method using $BDM_{1,N}\times S_{2,D}$ for different $\a$, pure Dirichlet BC: eff-index, number of refinements $k$, number of elements $n$, the final $\eta_2(\bsigma_{2,h},u_{2,h})$,  and the final error $\tri(\bsigma-\bsigma_{2,h},u-u_{2,h})\tri_2$}
\label{tab_AughBDM_PureD}
\begin{tabular}{|c|c|c|c|c|c|}
\hline
&$\mbox{eff-index}$& $k$  & $n$  & $\eta_2(\bsigma_{2,h},u_{2,h})$ &
$\tri(\bsigma-\bsigma_{2,h},u-u_{2,h})\tri_2$\\\hline
$\mbox{Data}1$ & $1.0728$& $20$  &  $320$ & $0.0262$ & $0.0244$ \\\hline
$\mbox{Data}2$ & $1.0792$& $43$  &  $416$ & $0.0659$ & $0.0611$ \\\hline
$\mbox{Data}3$ & $1.1068$& $47$  &  $380$ & $0.0929$& $0.0839$ \\\hline
$\mbox{Data}4$ & $1.1014$& $49$  &  $396$ & $0.1383$& $0.1256$ \\\hline
\end{tabular}
\end{table}

\subsection{Convergence and robustness tests for the adaptive $L^2$-LSFEM with pure Dirichlet boundary conditions}
In this subsection, we present the numerical results of convergence result of the adaptive $L^2$-LSFEM  \eqref{lsbform} with pure Dirichlet boundary conditions and the effectivity index for different $\a$. 

In Figure \ref{fig_line_LSRT_D4_PurD}, we present the numerical test of the adaptive $L^2$-LSFEM \eqref{lsbform} with pure Dirichlet boundary condition with Data4. The convergence and the final mesh are both optimal with enough mesh grids, and the final mesh is similar to the results presented in \cite{CD:02,CZ:09}.

\begin{figure}[htbp]
  \centering
  \begin{minipage}[t]{0.45\linewidth}
  \includegraphics[scale=0.38]{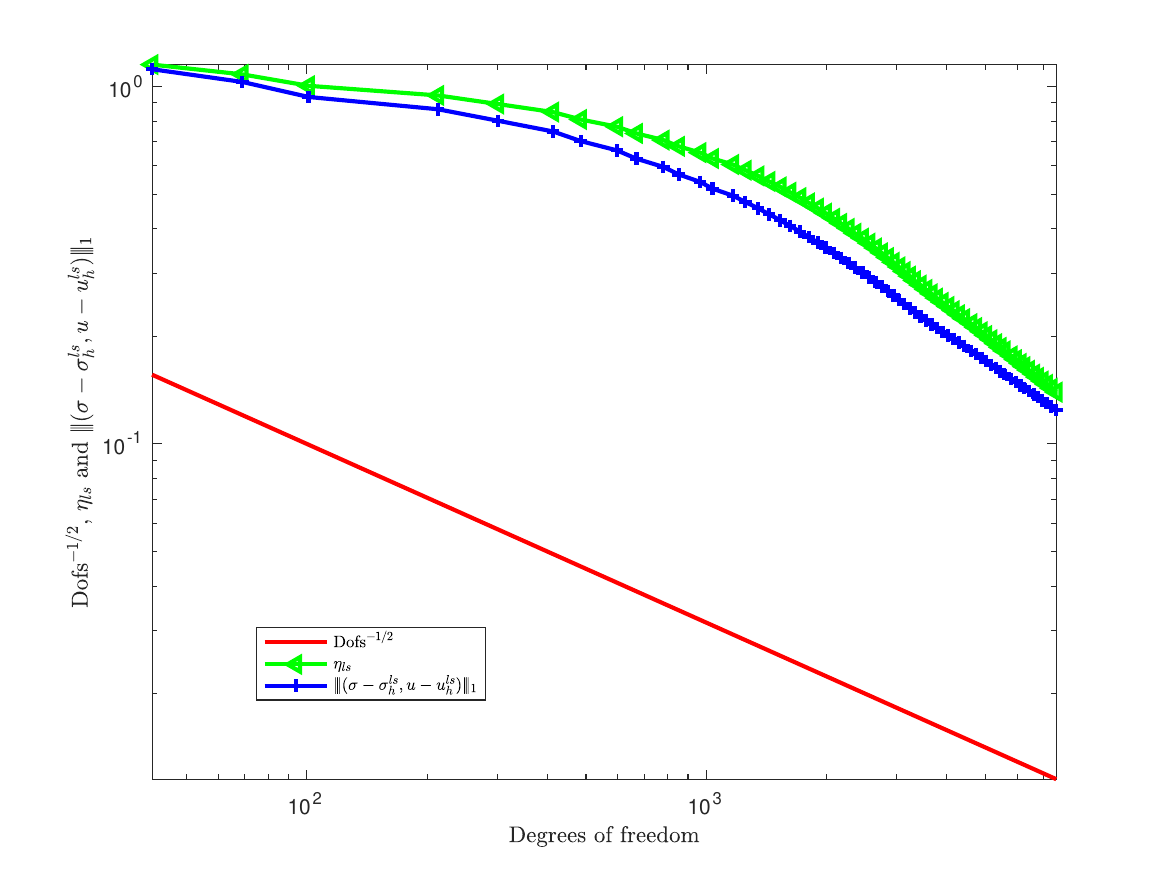}   
  \end{minipage}
  \begin{minipage}[t]{0.45\linewidth}
  \includegraphics[scale=0.5]{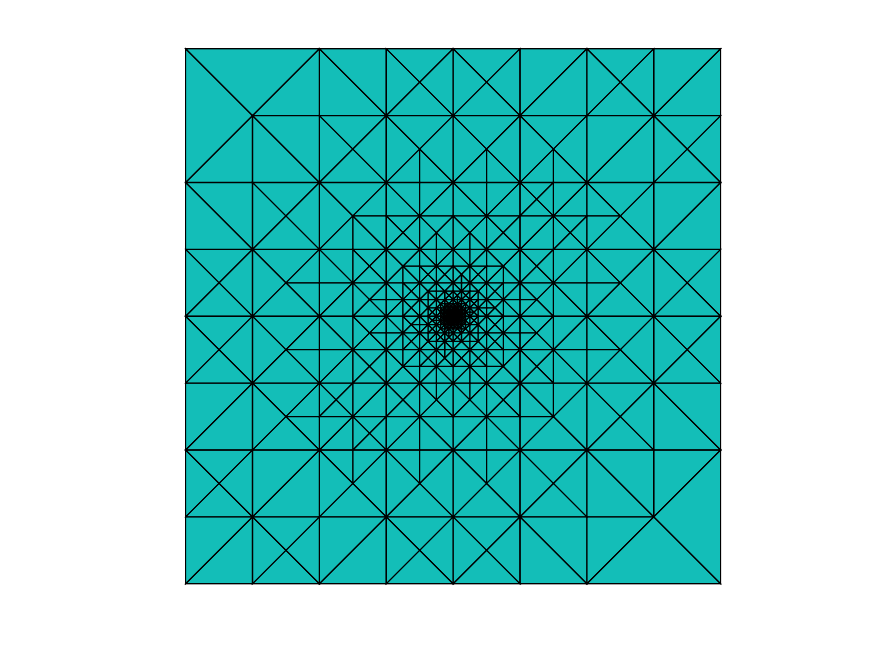}   
  \end{minipage}
    \caption{Adaptive convergence results and final mesh for the $L^2$-LSFEM with pure Dirichlet BC}
    \label{fig_line_LSRT_D4_PurD}
\end{figure}

We show the eff-indexes for different $\a$ for the $L^2$-LSFEM with pure Dirichlet BC in Table \ref{tab_LSRT_PureD}. As seen from these tables, the eff-index is almost a constant for different $\a$, this verifies that for this special case that each subdomain has a non-empty Dirichlet boundary condition, the $L^2$-LSFEM is robust. 

\begin{table}[htbp]
\caption{The $L^2$-LSFEM for different $\a$, pure Dirichlet BC: eff-index, number of refinements $k$, number of elements $n$, the final $\eta_{ls}(\bsigma^{ls}_{h},u^{ls}_{h})$,  and the final error $\tri(\bsigma-\bsigma^{ls}_h,u-u^{ls}_h)\tri_1$}
\label{tab_LSRT_PureD}
\begin{tabular}{|c|c|c|c|c|c|}
\hline
&$\mbox{eff-index}$& $k$  & $n$  & $\eta_{ls}(\bsigma^{ls}_h,u^{ls}_h)$ &
$\tri(\bsigma-\bsigma^{ls}_h,u-u^{ls}_h)\tri_1$\\\hline
$\mbox{Data}1$ & $1.0019$& $31$  &  $14476$ & $0.0263$ & $0.0262$ \\\hline
$\mbox{Data}2$ & $1.0171$& $58$  &  $7772$ & $0.0628$ & $0.0617$ \\\hline
$\mbox{Data}3$ & $1.0406$& $63$  &  $5400$ & $0.0871$& $0.0837$ \\\hline
$\mbox{Data}4$ & $1.1216$& $65$  &  $3744$ & $0.1395$& $0.1244$ \\\hline
\end{tabular}
\end{table}

\subsection{Robustness tests for adaptive augmented mixed methods with mixed boundary conditions}
In this subsection, we present the numerical results of the effectivity index of adaptive augmented mixed methods with mixed boundary conditions with different $\a$ to check the robustness of the methods.  

The mixed boundary are chosen as follows: $\Gamma_D= \{(x,y)\in\partial\O ; x\in(-1,1), y =-1\}$ and $\Gamma_N = \partial\O\setminus\Gamma_D$. The boundary conditions $g_D$ and $g_N$ are given by the true solution.

From Table \ref{tab_AugRT_PureD_mixbd} to Table \ref{tab_AughBDM_PureD_mixbd}, we show the eff-indexes for different $\a$ for the first and second augmented methods. As seen from these tables, the eff-index is almost a constant for different $\a$; this verifies least-squares a posteriori error estimators for augmented mixed methods are robust for the general mixed boundary condition case. 

\begin{table}[htbp]
\caption{The first adaptive augmented mixed method for different $\a$, mixed BC: eff-index, number of refinements $k$, number of elements $n$, the final $\eta_1(\bsigma_{1,h},u_{1,h})$,  and the final error $\tri(\bsigma-\bsigma_{1,h},u-u_{1,h})\tri_1$}
\label{tab_AugRT_PureD_mixbd}
\begin{tabular}{|c|c|c|c|c|c|}
\hline
&$\mbox{eff-index}$& $k$  & $n$  & $\eta_1(\bsigma_{1,h},u_{1,h})$ &
$\tri(\bsigma-\bsigma_{1,h},u-u_{1,h})\tri_1$\\\hline
$\mbox{Data}1$ & $1.0006$& $37$  &  $41031$ & $0.0156$ & $0.0155$ \\\hline
$\mbox{Data}2$ & $1.0058$& $73$  &  $19970$ & $0.0375$ & $0.0373$ \\\hline
$\mbox{Data}3$ & $1.0138$& $84$  &  $13622$ & $0.0497$& $0.0490$ \\\hline
$\mbox{Data}4$ & $1.0497$& $93$  &  $7605$ & $0.0795$& $0.0758$ \\\hline
\end{tabular}
\end{table}

\begin{table}[htbp]
\caption{The second (mesh-weighted) adaptive augmented mixed method using $RT_{0,N}\times S_{1,D}$ for different $\a$, mixed BC: eff-index, number of refinements $k$, number of elements $n$, the final $\eta_2(\bsigma_{2,h},u_{2,h})$,  and the final error $\tri(\bsigma-\bsigma_{2,h},u-u_{2,h})\tri_2$}
\label{tab_AughRT_PureD_mixbd}
\begin{tabular}{|c|c|c|c|c|c|}
\hline
&$\mbox{eff-index}$& $k$  & $n$  & $\eta_2(\bsigma_{2,h},u_{2,h})$ &
$\tri(\bsigma-\bsigma_{2,h},u-u_{2,h})\tri_2$\\\hline
$\mbox{Data}1$ & $0.9965$& $39$  &  $45355$ & $0.0147$ & $0.0147$ \\\hline
$\mbox{Data}2$ & $0.9966$& $79$  &  $19103$ & $0.0374$ & $0.0375$ \\\hline
$\mbox{Data}3$ & $0.9962$& $93$  &  $12478$ & $0.0492$& $0.0494$ \\\hline
$\mbox{Data}4$ & $0.9900$& $108$  &  $6046$ & $0.0748$& $0.0756$ \\\hline
\end{tabular}
\end{table}

\begin{table}[htbp]
\caption{The second (mesh-weighted) adaptive augmented mixed method using $BDM_{1,N}\times S_{2,D}$ for different $\a$, mixed BC: eff-index, number of refinements $k$, number of elements $n$, the final $\eta_2(\bsigma_{2,h},u_{2,h})$,  and the final error $\tri(\bsigma-\bsigma_{2,h},u-u_{2,h})\tri_2$}
\label{tab_AughBDM_PureD_mixbd}
\begin{tabular}{|c|c|c|c|c|c|}
\hline
&$\mbox{eff-index}$& $k$  & $n$  & $\eta_2(\bsigma_{2,h},u_{2,h})$ &
$\tri(\bsigma-\bsigma_{2,h},u-u_{2,h})\tri_2$\\\hline
$\mbox{Data}1$ & $1.0735$& $23$  &  $560$ & $0.0154$ & $0.0144$ \\\hline
$\mbox{Data}2$ & $1.0775$& $51$  &  $704$ & $0.0386$ & $0.0358$ \\\hline
$\mbox{Data}3$ & $1.0831$& $59$  &  $670$ & $0.0529$& $0.0489$ \\\hline
$\mbox{Data}4$ & $1.0553$& $70$  &  $573$ & $0.0800$& $0.0758$ \\\hline
\end{tabular}
\end{table}

\subsection{Non-robustness for $L^2$-LSFEM with mixed boundary conditions}
We present the non-robustness of the $L^2$-LSFEM \eqref{lsbform} with mixed boundary conditions. We use the same mixed boundary setting as the previous subsection.

As we can see from Table \ref{tab_LSRT_MixBd}, the eff-index is not a constant. This verifies the conclusion that the $L^2$-LSFEM is not robust with respect to $\a$. On the other hand, as we can see, the eff-index does not change as strongly as $\a$; this is explained in Remark \ref{lsfem_mild}.

\begin{table}[htbp]
\caption{The $L^2$-LSFEM for different $\a$, mixed BC: eff-index, number of refinements $k$, number of elements $n$, the final $\eta_{ls}(\bsigma^{ls}_{h},u^{ls}_{h})$,  and the final error $\tri(\bsigma-\bsigma^{ls}_h,u-u^{ls}_h)\tri_1$}
\label{tab_LSRT_MixBd}
\begin{tabular}{|c|c|c|c|c|c|}
\hline
&$\mbox{eff-index}$& $k$  & $n$  & $\eta_{ls}(\bsigma^{ls}_h,u^{ls}_h)$ &
$\tri(\bsigma-\bsigma^{ls}_h,u-u^{ls}_h)\tri_1$\\\hline
$\mbox{Data}1$ & $0.9972$& $80$  &  $14434$ & $0.0259$ & $0.0260$ \\\hline
$\mbox{Data}2$ & $0.8641$& $91$  &  $9542$ & $0.0528$ & $0.0611$ \\\hline
$\mbox{Data}3$ & $0.7079$& $110$  &  $8713$ & $0.0590$& $0.0833$ \\\hline
$\mbox{Data}4$ & $0.4787$& $139$  &  $11754$ & $0.0596$& $0.1244$ \\\hline
\end{tabular}
\end{table}

In Figure \ref{fig_line_LSRT_D3_mixbd}, we can also see that even for a fixed $\a$, the eff-index is not a constant when mesh is refined, which means it is not even robust with respect to the mesh-size. In contract, in Figure \ref{fig_line_LSRT_D4_PurD},  the eff-index is almost a constant for a fixed $\a$ for the special case of $L^2$-LSFEM where the robust Poincar\'e inequality holds.

\begin{figure}[htbp]
  \centering
  \begin{minipage}[t]{0.45\linewidth}
  \includegraphics[scale=0.4]{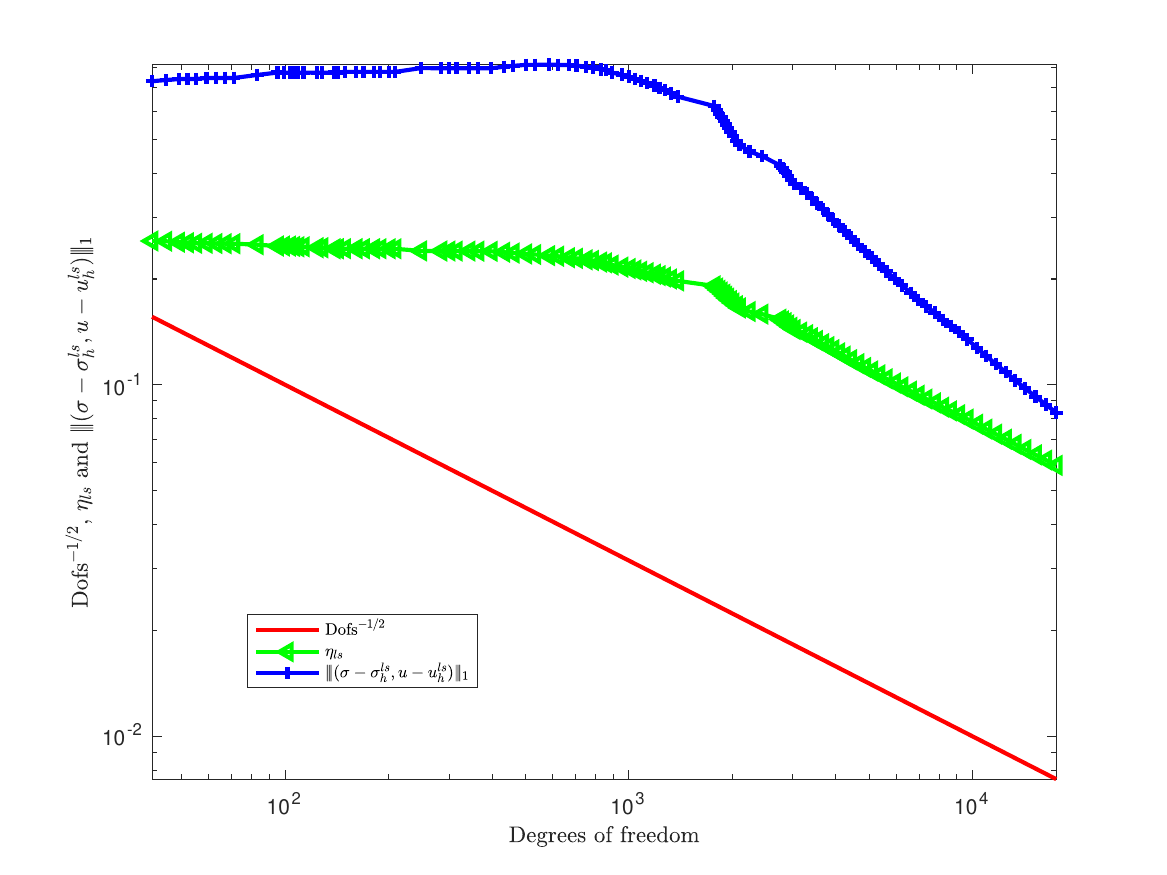}   
  \end{minipage}
    \caption{Adaptive convergence results for the $L^2$-LSFEM with mixed boundary conditions with Data3.}
    \label{fig_line_LSRT_D3_mixbd}
\end{figure}

\subsection{Numerical tests for mesh-weighted LSFEMs}
For the mesh-weighted LSFEM  \eqref{lsfem2h}, there are no rigorous a priori and a posteriori error estimates with respect to standard norms. We show the convergence result of adaptive convergence history for the problem with pure Dirichlet boundary conditions with Data4 using  $RT_{0,N}\times S_{1,D}$ with $39$ times of refinement in Figure \ref{fig_line_LShRT_PureD1}. The error measured in norm $\tri(\cdot,\cdot)\tri_2$ (the blue line in Figure \ref{fig_line_LShRT_PureD1}) does not decay as the a posteriori error estimator do. Also, the final mesh it obtained is skewed, which is a non-optimal case as discussed in \cite{CD:02,CZ:09}. This suggests that this method may not be a good choice for this class of problem.

\begin{figure}[htbp]
  \centering
  \begin{minipage}[t]{0.45\linewidth}
  \includegraphics[scale=0.38]{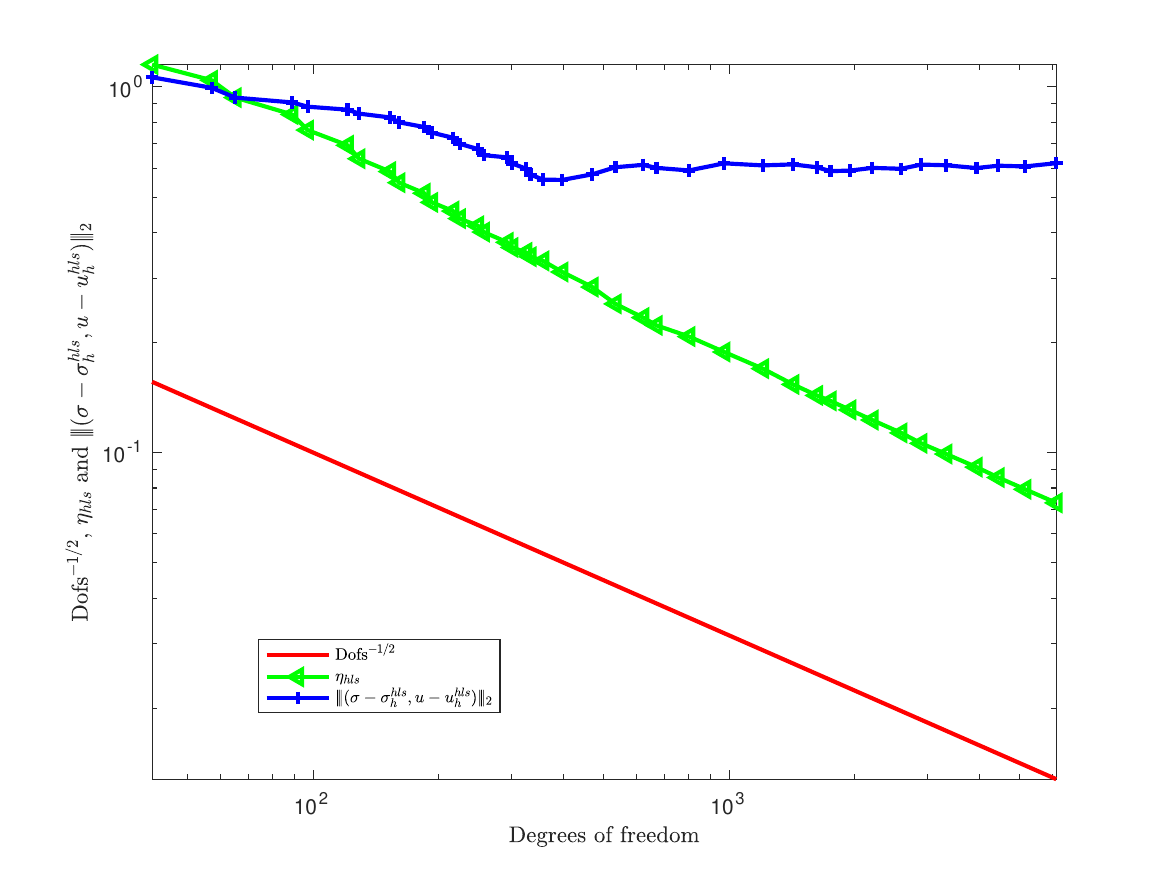}   
  \end{minipage}
  \begin{minipage}[t]{0.45\linewidth}
  \includegraphics[scale=0.5]{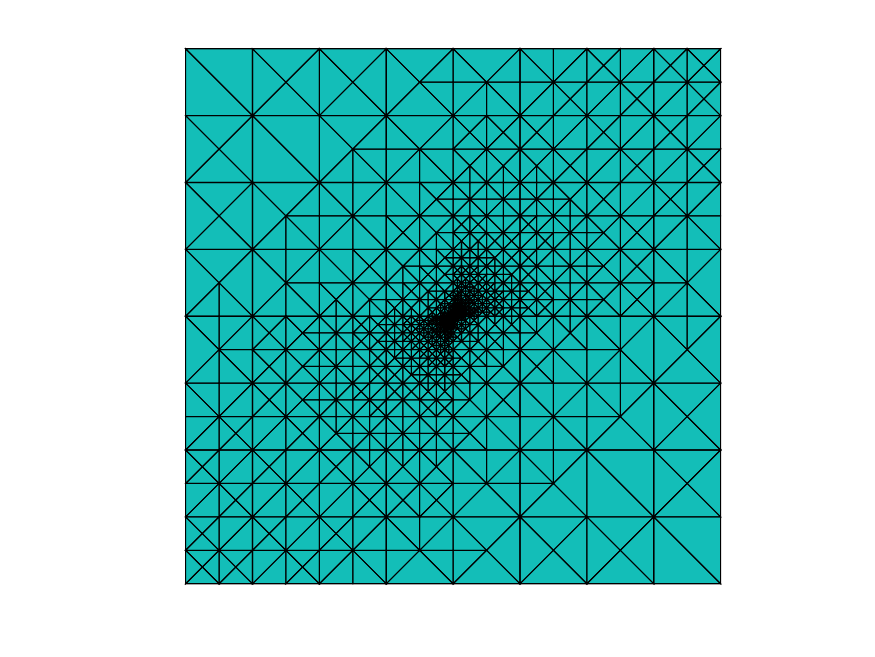}   
  \end{minipage}
     \caption{Adaptive convergence results and final mesh for the mesh-weighted LSFEM with pure Dirichlet boundary conditions.}
    \label{fig_line_LShRT_PureD1}
\end{figure}

\section{Final Comments}
In this paper, for the generalized Darcy problem, we study a special Galerkin-Least-Squares method, the augmented mixed finite element method, and its relationship to the standard least-squares finite element method (LSFEM). One of the paper's main contributions is to connect the augmented mixed finite element method and the bonafide LSFEM and discuss their shared properties and differences. Both methods share some good properties: both methods are based on the physically meaningful first-order system. Thus, important physical qualities can be approximated in their intrinsic spaces; both methods are coercive and stable, and their finite element discrete problems are coercive and stable as long as the discrete spaces are subspaces of the abstract spaces of the variational problems. Thus, no inf-sup condition of the discrete spaces and mesh size restriction are needed; both methods can use the least-squares functional as the build-in a posteriori error estimator.
On the other hand, the augmented mixed methods and the LSFEMs have their advantages. For the augmented mixed finite element methods, we show that the a priori and a posteriori error estimates are robust with respect to the coefficients of the problem. In contrast, we discuss the non-robustness of standard least-squares finite element methods. With the flexibility of being a partial least-squares method, it is possible that the augmented mixed finite element method can have better numerical properties than the bona-fide least-squares method. However, being a partial least-squares method, the augmented mixed method also loses one main property of the bonafide least-squares method, which may be very useful for minimization-based methods: the minimization of the least-squares energy, even though we can always associate a Ritz-minimization variational principle to the symmetric version of the augmented mixed method.

\bibliographystyle{plain}

\end{document}